\theoremstyle{plain}
\newtheorem{thm}{Theorem}[section]
\newtheorem{lem}[thm]{Lemma}
\newtheorem{cor}[thm]{Corollary}
\newtheorem{prop}[thm]{Proposition}
\theoremstyle{definition}
\newtheorem{rem}[thm]{Remark}
\numberwithin{equation}{section}
\newcommand{\C}{\mathbb C}
\newcommand{\D}{\mathbb D}
\newcommand{\R}{\mathbb R}
\newcommand{\cB}{\mathcal B}
\newcommand{\cC}{\mathcal C}
\newcommand{\cS}{\mathcal S}
\newcommand{\cH}{\mathcal H}
\newcommand{\zbar}{\bar{z}}
\renewcommand{\Re}{\operatorname{Re}}
\newcommand{\loc}{\mathrm{loc}}
\let\swap=\phi
\let\phi=\varphi
\let\varphi=\swap
\let\swap=\epsilon
\let\epsilon=\varepsilon
\let\varepsilon=\swap
\let\swap=\leq
\let\leq=\leqslant
\let\leqslant=\swap
\let\swap=\geq
\let\geq=\geqslant
\let\geqslant=\swap
\newcommand{\hhh}{\mathcal{H}}
\newcommand{\cc}{\mathbb{C}}
\newcommand{\bz}{\bar{z}}
\newcommand{\dist}{\mathrm{dist}}
\renewcommand{\bar}[1]{\overline{#1}} 
\title [Nonlinear Beltrami operators]
{Nonlinear Beltrami operators, Schauder estimates and bounds for the Jacobian}\date{}
\author[K. Astala]{Kari Astala}
\address{Department of Mathematics and Statistics, University of Helsinki, 
         P.O. Box 68, FI-00014, Helsinki, Finland; LE STUDIUM, Loire Valley Institute for Advanced Studies, Orl\'eans \& Tours, France, MAPMO, rue de Chartres, 45100 Orl\'eans, France}
\email{kari.astala@helsinki.fi}
\author[A. Clop]{Albert Clop}
\address{Department of Mathematics, Universitat Aut\`onoma de Barcelona, 08193 Bellaterra (Barcelona), Catalonia}
\curraddr{}
\email{albertcp@mat.uab.cat}
\author[D. Faraco]{Daniel Faraco}
\address{Department of Mathematics, Universidad Aut\'onoma de Madrid, 28049 Ma\-drid, 
Spain; ICMAT CSIC-UAM-UCM-UC3M, 28049 Madrid, Spain}
\curraddr{}
\email{daniel.faraco@uam.es}
\author[J. J\"a\"askel\"ainen]{Jarmo  J\"a\"askel\"ainen}
\address{Department of Mathematics and Statistics, University of Helsinki, 
         P.O. Box 68, FI-00014, Helsinki, Finland; Department of Mathematics, Universidad Aut\'onoma de Madrid, 28049 Madrid, 
Spain}
\curraddr{}
\email{jarmo.jaaskelainen@helsinki.fi}
\author[A. Koski]{Aleksis Koski}
\address{Department of Mathematics and Statistics, University of Helsinki, 
         P.O. Box 68, FI-00014, Helsinki, Finland}
\curraddr{}
\email{aleksis.koski@helsinki.fi}
\thanks{K.A. was supported by Academy of Finland project  SA-12719831. 
A.C. was supported by research grants 2014SGR75 (Generalitat de Catalunya), MTM2016-75390-P (Gobierno de Espa\~na) and FP7-607647 (European Union).  D.F. was supported by research grant MTM2011-28198 from the Ministerio de Ciencia e Innovaci\'on (MCINN), by MINECO: ICMAT Severo Ochoa project SEV-2011-0087, and by the ERC 301179.
J.J. was supported by the ERC 301179 and Academy of Finland (no. 276233).
A.K. was supported by the V\"ais\"al\"a Foundation.}
\keywords{Quasiconformal mappings, nonlinear Beltrami equation, Schauder estimates, non-vanishing of the Jacobian}
\subjclass[2010]{{30C62}, {35J60}, {35J46}, {35B65}}
\begin{document}

\frenchspacing

\begin{abstract}
We provide Schauder estimates for nonlinear Beltrami equations and lower bounds of the Jacobians for homeomorphic solutions. 
The results were announced in 
\cite{ACFJ} but here we give detailed proofs.  
\end{abstract}

\maketitle

\section{Introduction}

\noindent  This note is devoted to establish  properties of solutions to the nonlinear Beltrami equation 
\begin{equation}\label{hqrjac}
\partial_{\zbar} f(z) = \cH(z, \partial_z f(z))  \qquad \text{a.e.}
\end{equation}
under additional regularity of $\cH$. Recall that  the strong ellipticity of the equation is encoded in the fact that the function $\cH(z,\xi)$ is $k$-Lipschitz on its second variable
where $k<1$. In particular, $W^{1,2}_{\loc}$-solutions to \eqref{hqrjac} are {\it a priori} $K$-quasiregular, where $K = \frac{1+k}{1-k}$.

In the recent monograph \cite{AIM} on quasiconformal mappings and elliptic equations it was established that the nonlinear Beltrami equation  governs effectively all nonlinear planar elliptic systems. The nonlinear equation was introduced by Bojarski and Iwaniec in 
\cite{Boj74, boj, iw} and its basic $L^p$-properties were obtained in \cite{AIS}. On the other hand,   to study oscillating properties of sequences of gradients of Sobolev mappings in \cite{Faraco04,tartar}, it was vital to associate to them a corresponding nonlinear Beltrami equation.  

The nonlinear Beltrami equation shares the existence properties of homeomorphic solutions with the linear one \cite{AIM} but, for example, the uniqueness fails in general as proved in \cite{ACFJS}.  In \cite{ACFJ} it was proved that  the set of homeomorphic solutions forms an embedded submanifold of $W^{1,2}_{\textrm{loc}}(\C, \C)$ and that under regularity assumptions the manifold of homeomorphic solutions defines
uniquely the structure function $\cH$.  The arguments in \cite{ACFJ} rely on regularity properties of the solutions, which we prove in the current paper. 

Let us state our regularity assumptions on the structure function $\cH(z,\xi)$. Throughout this paper we will assume H\"older continuity of $\cH$ in the first variable and $k$-Lipschitz dependence on the second one. More precisely, given an open bounded set $\Omega \subset \C$, we assume that 
\begin{equation}\label{Hcondition}
\aligned
&|\cH(z_1, \xi_1) - \cH(z_2, \xi_2)| \leq \mathbf{H}_{\alpha}(\Omega)\,|z_1 - z_2|^{\alpha}\bigl(|\xi_1| + |\xi_2|\bigr) + k\,|\xi_1 - \xi_2|,\\
&\cH(z_1, 0) \equiv 0,
\endaligned
\end{equation}
for all $z_1, z_2 \in \Omega$, $\xi_1, \xi_2 \in \C$, where $\alpha \in (0,1)$ and  $k = \frac{K - 1}{K + 1} < 1$ are fixed. 

In  case $\cH(z,\xi)$ is linear in the second variable, \eqref{Hcondition} implies 
that the derivatives of the solutions to Beltrami equation  are $\alpha$-H\"older continuous and that the Jacobian of a homeomorphic solution
does not vanish (see \cite{AIM, ren}). Our goal is to see if similar regularity results hold in the general nonlinear case. 
We start with our second main question.

\begin{thm}\label{Jac} Suppose the structure function $\hhh(z,\xi)$ satisfies \eqref{Hcondition}.  Then a homeomorphic solution  $f \in W^{1,2}_{\loc}(\Omega, \C)$ to the nonlinear Beltrami equation
\eqref{hqrjac} has a positive Jacobian, $J(z, f) > 0$.

Further, if $\Omega = \C$ and $f:\cc \to \cc$ is a normalised solution, i.e., $f(0) = 0$ and $f(1) = 1$, then there is a lower bound for the Jacobian
$$ \inf_{z\in\D(0, R_0)} J(z, f) \geq c(\cH, R_0) > 0, \qquad 0 < R_0 < \infty. $$
\end{thm}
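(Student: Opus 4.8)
The plan is to reduce the nonlinear equation to a linear one along the given solution. Given a homeomorphic $W^{1,2}_{\loc}$ solution $f$, set
\[
\mu(z) \;=\; \begin{cases} \dfrac{\cH\bigl(z,\partial_z f(z)\bigr)}{\partial_z f(z)}, & \partial_z f(z)\neq 0,\\[2mm] 0, & \text{otherwise,}\end{cases}
\]
so that $f$ satisfies the \emph{linear} Beltrami equation $\partial_{\zbar}f=\mu\,\partial_z f$ with $\|\mu\|_\infty\leq k$. The point is that the Schauder-type regularity we have already established (the first main result, i.e.\ $\alpha$-H\"older continuity of $\partial_z f,\partial_{\zbar}f$ on compact subsets) together with $\cH(z,0)\equiv 0$ and \eqref{Hcondition} forces the coefficient $\mu$ to be H\"older continuous on compact subsets, at least wherever $\partial_z f$ is bounded away from $0$. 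The first step is therefore to exploit the Stoilow-type factorization / the fact that a quasiregular homeomorphism has derivatives that cannot vanish on a set of positive measure, and then to upgrade this: on any compact $E\subset\Omega$ one argues that $J(\cdot,f)=|\partial_z f|^2-|\partial_{\zbar}f|^2\geq(1-k^2)|\partial_z f|^2$, so it suffices to show $\partial_z f$ never vanishes.

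To see that $\partial_z f$ is zero-free, I would use the classical representation for linear Beltrami: near any point $z_0$ write $f=g\circ\psi$ where $\psi$ is a quasiconformal homeomorphism solving the same $\mu$-equation (a principal solution localized via a cutoff) and $g$ is holomorphic. Since $f$ is a homeomorphism, $g$ is a holomorphic homeomorphism, hence $g'\neq 0$; and $\psi$ being quasiconformal has $\partial_z\psi\neq 0$ a.e.\ — but to get the \emph{pointwise} everywhere statement with the H\"older continuity in hand, one instead notes that $\partial_z f$, being H\"older continuous, is continuous, and if it vanished at some $z_0$ then by the factorization and the argument principle the homeomorphism $f$ would be locally non-injective (the winding number of $f$ around $f(z_0)$ on a small circle would exceed one), a contradiction. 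This is the step I expect to be the main obstacle: making the local factorization rigorous in a way compatible with the low a priori regularity ($W^{1,2}_{\loc}$) and then transferring the continuity of $\partial_z f$ into the strict inequality $J(z,f)>0$ cleanly; the care is in choosing the cutoff so that the auxiliary coefficient is globally compactly supported and genuinely H\"older, so that standard principal-solution theory applies.

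For the quantitative lower bound when $\Omega=\C$ and $f$ is normalized, the strategy is a compactness-and-normal-families argument. Fix $R_0$. Suppose the bound fails: there is a sequence of structure functions $\cH_n$ all satisfying \eqref{Hcondition} with the \emph{same} constants $\mathbf H_\alpha$, $k$, $\alpha$, and normalized homeomorphic solutions $f_n$ with $\inf_{\D(0,R_0)}J(\cdot,f_n)\to 0$. By the uniform $K$-quasiconformality of normalized maps one has compactness: $f_n\to f$ locally uniformly (up to a subsequence) with $f$ a normalized $K$-quasiconformal homeomorphism, and by the a priori Schauder estimates the convergence improves to $C^{1,\alpha}_{\loc}$, so $J(z_n,f_n)\to J(z_0,f)$ at the (sub)sequence of near-minimizers $z_n\to z_0\in\overline{\D(0,R_0)}$. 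Also $\cH_n\to\cH_\infty$ locally uniformly (Arzel\`a--Ascoli, using the H\"older bound in $z$ and Lipschitz bound in $\xi$), and passing to the limit in \eqref{hqrjac} shows $f$ solves the nonlinear Beltrami equation with structure function $\cH_\infty$, which again satisfies \eqref{Hcondition}. By the qualitative part already proved, $J(z_0,f)>0$, contradicting $J(z_0,f)=\lim J(z_n,f_n)=0$. Hence the infimum is bounded below by a constant depending only on $R_0$ and the constants in \eqref{Hcondition} (equivalently, on $\cH$). The delicate points here are verifying that the limiting $\cH_\infty$ still obeys $\cH_\infty(z,0)\equiv 0$ (immediate from uniform convergence) and that the Schauder estimates are uniform in the sequence — but that is exactly what the first main theorem provides, since its constants depend only on $\mathbf H_\alpha(\Omega)$, $k$, $\alpha$ and the domain.
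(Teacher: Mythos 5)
Your compactness argument for the quantitative lower bound is essentially the paper's: the counter-assumption with a sequence $(f_n,z_n,\cH_n)$, normal-families compactness, the Schauder estimates giving locally uniform $C^{1,\gamma}$-bounds (hence $C^{1,\gamma}_{\loc}$-convergence), local uniform convergence $\cH_n\to\cH_\infty$ via Arzel\`a--Ascoli, passage to the limit in the equation, and contradiction with the qualitative statement. That part is correct.

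However, your argument for the \emph{pointwise} positivity $J(z,f)>0$ has a genuine gap. The step you flag as the main obstacle is in fact the place where the reasoning fails, and no amount of care about cutoffs fixes it. After writing $f=g\circ\psi$ with $g$ a holomorphic homeomorphism (so $g'\neq 0$) and $\psi$ quasiconformal, the winding number of $f$ around $f(z_0)$ on a small circle equals the local index of $g$ at $\psi(z_0)$, which is $1$. For a homeomorphism this index is \emph{always} $1$, whether or not $D_z f(z_0)=0$; from $\partial_z f = g'(\psi)\,\partial_z\psi$ one only learns $g'(\psi(z_0))\neq 0$, which says nothing about $\partial_z\psi(z_0)$ — and $\partial_z\psi$ of a quasiconformal map can vanish at isolated points. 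A concrete counterexample to the logic (not to the theorem) is $f(z)=z|z|$: it is a $K$-quasiconformal homeomorphism with $K=2$, it is $C^{1,1}$, and $\partial_z f(0)=0$, yet its winding number around $0$ is $1$. Your argument makes no use of the H\"older condition \eqref{Hcondition} at the winding-number step, so, were it valid, it would prove the false statement that every $C^{1,\alpha}$ quasiconformal homeomorphism has non-vanishing Jacobian. (The example above does not contradict Theorem~\ref{Jac} because the induced $\cH(z,\xi)=\tfrac{z^2}{3|z|^2}\xi$ is not H\"older in $z$ at the origin.)

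The missing idea is precisely what the paper supplies in Lemma~\ref{Hinverse}: one constructs, by a fixed-point argument, a structure function $\cH^*$ so that $g=f^{-1}$ solves $\partial_{\bar\omega}g=\cH^*(g,\partial_\omega g)$, and shows that $\cH^*$ inherits the H\"older condition \eqref{Hcondition} (with ellipticity degrading from $K$ to $K^3$). Then Theorem~\ref{schauder} applies to $g$ as well, so both $J_f$ and $J_g$ are continuous; the identity $J_f(z)\,J_g(f(z))=1$, valid a.e., extends by continuity to all of $\Omega$, which forces $J_f>0$ everywhere. Your linearisation $\mu(z)=\cH(z,\partial_z f)/\partial_z f$ is tempting but circular — you only get H\"older continuity of $\mu$ where $\partial_z f$ is bounded away from $0$, which is what you want to prove. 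Finding the \emph{nonlinear} equation for the inverse (rather than linearising) is exactly what gets around this. (The paper's alternative proof in the autonomous case, Theorem~\ref{positivejacobianF}, factors the \emph{difference quotients} $F_h$, which, unlike $f$, are zero-free quasiregular functions, and then uses Hurwitz; note that this avoids your problem because it is the nonvanishing of $F_h$, not the injectivity of $F$ directly, that is propagated to the limit.)
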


Besides of intrinsic interest, the non-vanishing of the Jacobian is, e.g., a key property needed in the study of manifolds of quasiconformal maps in  \cite{ACFJ}. 
In the linear case the statement can be shown by the representation theorem of the quasiregular maps (e.g., \cite[Theorems II.5.2 and II.5.47]{ren}) or by using the Schauder estimates for the inverse (e.g., proof of \cite[Proposition~5.1]{G-cl}), i.e., showing that also  $f^{-1}$ solves a Beltrami equation with H\"older continuous coefficients and hence the inverse is locally H\"older continuous, too. In the nonlinear case it is much harder to establish a suitable equation for the inverse. If we denote $g = f^{-1}$ then $g$ satisfies the nonlinear Beltrami equation
$$
\partial_{\bar{\omega}}g(\omega) = -\frac{1}{J(z, f)}\cH\left(g(\omega), J(z, f)\,\overline{\partial_{\omega}g(\omega)}\right), \qquad \omega = f(z) \qquad \text{a.e.},
$$
which would have   H\"older continuous coefficients if we {\em a priori}  knew that the Jacobian $J(z, f)$ has a positive lower bound. 

 In Section~\ref{Jacproof} we show that it is also possible to recover a nonlinear equation (that satisfies \eqref{Hcondition}) for $g$, giving us the required regularity to be able to conclude that the Jacobian must be positive everywhere.

\medskip

Next we turn to the regularity of the gradient. 
Nowadays the term Schauder estimates refers to various types of  H\"older regularity results in the theory of PDEs. Juliusz Schauder pioneered these topics in \cite{s1, s2}. His papers deal mostly with linear, quasilinear and nonlinear elliptic equations of second order. The importance of his ideas (freezing the equation, i.e., viewing equations with H\"older regular coefficients locally as perturbations of equations with constant coefficients) is reflected in an enormous  number of  applications and generalisations. These ideas were successfully used to deal with   quasilinear equations in \cite{LU} and the nonlinear divergence equations with $C^1$-dependence on the gradient variable \cite[Chapter 6]{Gia}, see also \cite{MingioneKuusi} for recent developments.  Notice that quasilinear elliptic equations relate to the nonlinear Beltrami equation through the two dimensional Hodge operator \cite{AIM}, though the relation to the regularity of $\cH$ is not clear.

Schauder estimates for general nonlinear structure functions $\cH(z, \xi)$, which are only Lipschitz in the gradient variable $\xi$ and H\"older continuous in $z$ form an important  step in proving Theorem \ref{Jac}. The required estimates do not seem to appear in literature in this generality, and therefore  we give a  quasiregular proof for the  Schauder estimates in this setting.  A different quasiregular approach of Schauder estimates for linear and quasilinear Beltrami equations  is considered in \cite[Chapter 15]{AIM}.

\begin{thm}\label{schauder} Assuming \eqref{Hcondition}, 
suppose   $f \in W^{1,2}_{\loc}(\Omega, \C)$ is  a solution to the nonlinear Beltrami equation
\begin{equation*}
\partial_{\zbar} f(z) = \cH(z, \partial_z f(z))  \qquad \text{a.e. \, in} \;\;  \Omega.
\end{equation*}
Then  $f \in C^{1, \gamma}_{\loc}(\Omega, \C)$, where $\gamma = \alpha$, if $\alpha < \frac1K$; otherwise 
one  can take any $\gamma < \frac1K$. 
Moreover, we have a norm bound, when  $\D(\omega, 2R) \Subset \Omega$,
\begin{equation}\label{thmnorm}
{\|D_zf\|}_{C^\gamma(\D(\omega, R))}
\leq c(K, \alpha, \gamma, \omega, R, \mathbf{H}_\alpha(\Omega))\,\|D_z f\|_{L^2(\D(\omega, 2R))}.\end{equation}
\end{thm}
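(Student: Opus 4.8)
The plan is to follow Schauder's classical philosophy of freezing coefficients, adapted to the quasiregular setting. Fix a point $z_0 \in \D(\omega, R)$ and write $\xi_0 = \partial_z f(z_0)$ (once we know $f$ is continuously differentiable near $z_0$; a priori we only have $W^{1,2}_{\loc}$, so the argument should be run first on difference quotients or mollifications, or bootstrapped from an initial Hölder bound on $D_z f$). The key algebraic observation is that since $\cH(z,\xi)$ is $k$-Lipschitz in $\xi$, for fixed $z$ there is a measurable ``linearisation'' $\mu(z), \nu(z)$ with $|\mu(z)| + |\nu(z)| \leq k$ such that $\cH(z, \partial_z f(z)) - \cH(z_0, \xi_0) = \mu(z)(\partial_z f(z) - \xi_0) + \nu(z)\overline{(\partial_z f(z) - \xi_0)} + \big(\cH(z,\xi_0) - \cH(z_0,\xi_0)\big)$, where the last bracket is controlled by $\mathbf{H}_\alpha(\Omega)|z-z_0|^\alpha|\xi_0|$ using \eqref{Hcondition}. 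Thus $h(z) := f(z) - \xi_0 z - \overline{\xi_0'}\,\bar z$ (choosing $\xi_0'$ to absorb the $\partial_{\bar z}$ part) solves a \emph{linear} Beltrami equation $\partial_{\bar z} h = \mu \partial_z h + \nu \overline{\partial_z h} + g(z)$ with $\|g\|_\infty \lesssim \mathbf{H}_\alpha|\xi_0| R^\alpha$ on $\D(z_0, R)$ after rescaling.

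Next I would run the standard decay-of-oscillation iteration. The homogeneous part ($g \equiv 0$) is a $K$-quasiregular map, for which the Cacciopoli/Morrey-type decay estimate $\int_{\D(z_0,r)} |D h|^2 \lesssim (r/\rho)^{2/K}\int_{\D(z_0,\rho)}|Dh|^2$ holds (this is the sharp area-distortion exponent $1/K$ for quasiregular maps, cf.\ \cite{AIM}). The inhomogeneous term contributes, via the Beurling--Ahlfors/Cauchy-transform solution operator applied to $g$, a perturbation of size comparable to $\|g\|_\infty \rho \lesssim \mathbf{H}_\alpha |\xi_0|\rho^{1+\alpha}$, which for $\alpha < 1/K$ is of lower order than the homogeneous decay $\rho^{1/K}$. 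Combining, one gets a Campanato-type inequality
\begin{equation*}
\int_{\D(z_0,r)} |D_z f - (D_z f)_{z_0,r}|^2 \, \le\, C\Big(\big(\tfrac{r}{\rho}\big)^{2+2\gamma} \int_{\D(z_0,\rho)} |D_z f - (D_z f)_{z_0,\rho}|^2 + r^{2+2\gamma}\big(|\xi_0|^2 + \|D_z f\|_{L^2}^2\big)\Big),
\end{equation*}
with $\gamma = \alpha$ when $\alpha < 1/K$ and any $\gamma < 1/K$ otherwise. Iterating this over dyadic radii (the usual lemma that such an inequality forces the Campanato seminorm to be finite) and invoking the Campanato embedding $C^\gamma \simeq \mathcal{L}^{2,2+2\gamma}$ yields the interior bound \eqref{thmnorm}, after a covering argument to pass from a single ball $\D(z_0,r)$ to $\D(\omega, R)$, and controlling the pointwise size $|\xi_0| = |\partial_z f(z_0)|$ in terms of $\|D_z f\|_{L^2(\D(\omega,2R))}$.

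The main obstacle, and the place where the nonlinearity genuinely bites, is \textbf{justifying the freezing step rigorously without circular reasoning}: the linearisation coefficients $\mu, \nu$ depend on $\partial_z f$ itself, so the ``constant-coefficient'' homogeneous comparison map is not literally available — one must either (i) compare $h$ with the solution of the \emph{variable}-coefficient homogeneous equation $\partial_{\bar z} w = \mu \partial_z w + \nu \overline{\partial_z w}$ on the same ball and use that \emph{any} such solution still enjoys the quasiregular decay estimate (which is true, since the estimate only uses the ellipticity bound $|\mu|+|\nu|\le k$, not regularity of $\mu,\nu$), or (ii) establish an initial, possibly small, Hölder exponent for $D_z f$ by a separate argument (e.g.\ higher integrability plus the Astala exponent) and then bootstrap. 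Option (i) is cleaner and is the route I would take. A secondary technical point is the borderline case $\alpha \geq 1/K$, where the inhomogeneous term $\rho^{1+\alpha}$ is \emph{not} dominated by $\rho^{1/K}$ a priori but only after noting $1 + \alpha > 1/K$ always (since $1/K < 1$), so the decay rate is still governed by the better of the two exponents, giving any $\gamma < 1/K$; this requires care in the iteration bookkeeping but no new idea.
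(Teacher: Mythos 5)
Your overall architecture (freeze, compare, Campanato iteration) matches the paper's, but the specific freezing you propose contains a gap that would break the proof at its central step. You freeze \emph{both} variables: $z\mapsto z_0$ and, via an $\R$-linear divided-difference representation, $\xi\mapsto\xi_0$, so that your comparison problem in option (i) is the linear homogeneous equation $\partial_{\bar z}w=\mu(z)\partial_z w+\nu(z)\overline{\partial_z w}$ with merely \emph{measurable} coefficients satisfying $|\mu|+|\nu|\le k$. For such $w$ the only decay available is $\int_{\D_r}|Dw|^2\lesssim (r/\rho)^{2/K}\int_{\D_\rho}|Dw|^2$, which is a Morrey bound on $\|Dw\|_{L^2}$ and yields only $w\in C^{1/K}$ --- H\"older continuity of $w$, not of $Dw$. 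To reach $C^{1,\gamma}$ you need the \emph{excess} decay $\int_{\D_r}|Dw-(Dw)_r|^2\lesssim (r/\rho)^{2+2/K}\int_{\D_\rho}|Dw-(Dw)_\rho|^2$, and this is false for measurable-coefficient linear Beltrami equations: their solutions need not have continuous derivatives at all. So the Campanato inequality you write down, with exponent $2+2\gamma$ on the mean-oscillation of $D_zf$, does not follow from the homogeneous decay you invoke; there is an exponent mismatch of exactly one order of differentiation. Your option (ii) does not rescue this either, since higher integrability and the Astala exponent give an initial H\"older exponent for $f$, not for $D_zf$, and without an initial H\"older bound on $D_zf$ the coefficients $\mu,\nu$ stay merely measurable and the bootstrap cannot start.

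The paper avoids this by freezing \emph{only} the $z$-variable: the comparison map $F$ solves the Riemann--Hilbert problem for the genuinely nonlinear autonomous equation $\partial_{\bar z}F=\cH(z_0,\partial_zF)$ on $\D(z_0,R)$ with $\Re(f-F)=0$ on the boundary. The point is that the autonomous equation can be \emph{differentiated}: its difference quotients, hence its directional derivatives $\partial_eF$, are themselves $K$-quasiregular (Proposition~\ref{ccc}), and applying the quasiregular decay to $\partial_eF$ rather than to $F$ gives precisely the excess decay $\|D_zF-(D_zF)_\rho\|_{L^2(\D_\rho)}\le c(K)(\rho/R)^{1+1/K}\|D_zF-(D_zF)_R\|_{L^2(\D_R)}$ (Corollary~\ref{holder1K}). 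This is the estimate your linearised comparison map cannot supply. The paper then runs a two-step iteration: first a Morrey bound giving $f\in C^\beta_{\loc}$ for all $\beta<1$ (needed to control the inhomogeneous term $\mathbf{H}_\alpha R^\alpha\|\partial_zf\|_{L^2(\D_R)}\lesssim R^{\alpha+\beta}$ via Caccioppoli), then the excess iteration yielding $D_zf\in C^{\alpha+\beta-1}_{\loc}$, bootstrapped to $\beta=1$ once $D_zf$ is known to be bounded. Your remark on the borderline case also misidentifies the competition: the relevant comparison is between the inhomogeneous exponent $1+\alpha$ and the homogeneous excess exponent $1+\frac1K$, i.e.\ $\alpha$ versus $\frac1K$, not $1+\alpha$ versus $\frac1K$; this is exactly where the restriction $\gamma<\frac1K$ in the theorem comes from.
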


Let us emphasise that there is a restriction $\gamma < \frac 1K$ on the H\"older exponent. This restriction already occurs at the level of the autonomous equation 
\[ \partial_{\zbar} f(z) = \cH( \partial_z f(z)) \qquad \text{a.e.,} \]
for which we prove the $C^{1,\frac1K}_{\loc}$-regularity of solutions in Corollary~\ref{holder1K}. We do not know whether the bound $\gamma < \frac1K$ of Theorem \ref{schauder} is sharp, but the example
\[f_0:\cc \to \cc, \quad f_0(z) = z^2|z|^{\frac{3}{2K + 1} - 1}\]
shows that we must at least have the bound $\gamma \leq \frac{3}{2K + 1}$, as the function $f_0$ solves an autonomous equation with ellipticity constant $k=\frac{K-1}{K+1}$. This surprisingly shows that the optimal H\"older exponent in Theorem~\ref{schauder} depends not only on $\alpha$, but on the ellipticity constant of $\hhh$ as well. In particular one cannot always take $\gamma = \alpha$, contrary to the case where $\cH(z,\xi)$ is linear in $\xi$. We leave it as an interesting open problem to determine the optimal H\"older exponent in terms of $K$.

The restriction $\gamma < \frac1K$ is not needed if in addition to \eqref{Hcondition}, the structure function $\cH$ is assumed to be $C^1$ in the gradient variable as well. This will follow from the fact that for a $C^1$-regular autonomous equation, the solutions will be shown to be in $C^{1,\beta}_{\loc}(\Omega, \C)$ for every $0 < \beta < 1$. The  estimate on the $C^{1,\beta}_{\loc}(\Omega, \C)$-norm is locally uniform in the $L^2$-norm, but the dependence is not linear (as it is in \eqref{thmnorm}). 

\begin{thm}\label{schauderC1}
Let  $f \in W^{1,2}_{\loc}(\Omega, \C)$ be a solution to the  nonlinear Beltrami equation \eqref{hqrjac}, where we assume in addition that $\xi \mapsto \cH(z, \xi) \in C^1(\C, \C)$. Then  $f \in C^{1, \alpha}_{\loc}(\Omega, \C)$ with $\alpha$ as in \eqref{Hcondition}.
\end{thm}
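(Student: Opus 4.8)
The plan is to deduce Theorem~\ref{schauderC1} from the Schauder/freezing scheme behind Theorem~\ref{schauder}, fed with a sharper \emph{autonomous} model estimate: when the structure function is $C^1$ in $\xi$, solutions of the autonomous equation belong to $C^{1,\beta}_{\loc}$ for \emph{every} $\beta<1$, rather than only for $\beta\leq\tfrac1K$ as in Corollary~\ref{holder1K}. Note that Theorem~\ref{schauder} already gives $f\in C^{1,\gamma}_{\loc}(\Omega,\C)$ for some $\gamma>0$; in particular $\partial_z f$ is continuous, so it only remains to raise the H\"older exponent of $D_z f$ to $\alpha$ on compact subsets.

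First I would prove the autonomous model estimate. Let $F\in W^{1,2}_{\loc}$ solve $\partial_{\zbar}F=\cH_0(\partial_z F)$ with $\cH_0\in C^1(\C,\C)$ and $k$-Lipschitz; by Theorem~\ref{schauder}, $F\in C^1$, so the difference quotients $F_\tau(z)=\bigl(F(z+\tau)-F(z)\bigr)/|\tau|$ converge uniformly on compact sets as $\tau\to0$ along a fixed ray $e^{i\theta}$. Because the equation is autonomous and $\cH_0$ is $C^1$, the fundamental theorem of calculus gives
\[\partial_{\zbar}F_\tau = A_\tau\,\partial_z F_\tau + B_\tau\,\overline{\partial_z F_\tau},\qquad |A_\tau|+|B_\tau|\leq k,\]
so each $F_\tau$ is $K$-quasiregular, and a Caccioppoli inequality bounds $\|D_z F_\tau\|_{L^2_{\loc}}$ uniformly in $\tau$ by $\sup_\tau\|F_\tau\|_{L^\infty_{\loc}}$, which is finite since the $F_\tau$ converge. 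Letting $\tau\to0$, the coefficients $A_\tau,B_\tau$ converge locally uniformly (to $\partial_\xi\cH_0(\partial_z F)$ and $\partial_{\bar\xi}\cH_0(\partial_z F)$), so the directional derivative $\Psi_\theta:=e^{i\theta}\partial_z F+e^{-i\theta}\partial_{\zbar}F$ solves a uniformly elliptic linear two-coefficient Beltrami equation with \emph{continuous} coefficients. By the higher-integrability theory for such equations (VMO/continuous coefficients, via freezing and Calder\'on--Zygmund) one gets $\Psi_\theta\in W^{1,p}_{\loc}$ for every $p<\infty$, hence $\Psi_\theta\in C^{\beta}_{\loc}$ for every $\beta<1$ by Sobolev embedding in the plane. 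Taking $\theta=0$ and $\theta=\tfrac\pi2$ recovers $\partial_z F$ and $\partial_{\zbar}F$, so $F\in C^{1,\beta}_{\loc}$ for all $\beta<1$, with a bound on the norm by $\|D_z F\|_{L^2}$ that is no longer linear --- the non-linear dependence anticipated after Theorem~\ref{schauder}.

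Then I would run the freezing/Campanato iteration exactly as for Theorem~\ref{schauder}, but with this improved model exponent. Fixing $z_0$ and a small $r$ with $\D(z_0,r)\Subset\Omega$, let $F$ solve the frozen autonomous problem $\partial_{\zbar}F=\cH(z_0,\partial_z F)$ on $\D(z_0,r)$ with $F=f$ on $\partial\D(z_0,r)$; since $\cH(z_0,\cdot)$ is $C^1$ and $k$-Lipschitz, the model estimate applies to $F$ (and, after subtracting suitable affine functions, in the excess-decay form needed below). The remainder $w=f-F$ satisfies
\[\partial_{\zbar}w=\bigl(\cH(z,\partial_z f)-\cH(z_0,\partial_z f)\bigr)+\bigl(\cH(z_0,\partial_z f)-\cH(z_0,\partial_z F)\bigr),\]
where the first term is pointwise $O\bigl(\mathbf{H}_\alpha(\Omega)\,r^{\alpha}\,|\partial_z f|\bigr)$ by \eqref{Hcondition} and the second equals $\widetilde A\,\partial_z w+\widetilde B\,\overline{\partial_z w}$ with $|\widetilde A|+|\widetilde B|\leq k$; solving this inhomogeneous Beltrami equation with zero boundary data yields $\|D_z w\|_{L^2(\D(z_0,r))}\lesssim\mathbf{H}_\alpha(\Omega)\|D_z f\|_{L^\infty}\,r^{1+\alpha}$. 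In the excess-decay iteration for $\Phi(z_0,\rho)=\int_{\D(z_0,\rho)}|\partial_z f-(\partial_z f)_{z_0,\rho}|^2$, the contribution of $F$ decays like $(\rho/r)^{2+2\beta}\Phi(z_0,r)$ for any fixed $\beta\in(\alpha,1)$ (Step~1), while that of $w$ is $O(r^{2+2\alpha})$; since $\beta>\alpha$, the Campanato iteration lemma gives $\Phi(z_0,\rho)\lesssim\rho^{2+2\alpha}$ locally uniformly in $z_0$, i.e.\ $\partial_z f\in C^{0,\alpha}_{\loc}(\Omega,\C)$ and $f\in C^{1,\alpha}_{\loc}(\Omega,\C)$. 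The obstruction $\gamma<\tfrac1K$ of Theorem~\ref{schauder} disappears precisely because in Step~1 the model exponent $\beta$ may be taken arbitrarily close to $1$, and that is exactly where the $C^1$-dependence of $\cH$ on $\xi$ is used essentially.

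The main obstacle I anticipate is Step~1: passing from the uniformly elliptic two-coefficient equation with merely \emph{continuous} coefficients up to $W^{1,p}_{\loc}$-integrability for all finite $p$, together with the bookkeeping needed to make the difference-quotient limit rigorous (uniform $W^{1,2}_{\loc}$ bounds on $F_\tau$, convergence of the products $A_\tau\partial_z F_\tau$, and legitimacy of subtracting affine functions in the excess-decay step). Steps 2--3 then add little beyond the freezing machinery already set up for Theorem~\ref{schauder}, re-run with the better model exponent.
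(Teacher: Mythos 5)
Your overall architecture coincides with the paper's: reduce everything to an improved excess-decay estimate for the \emph{autonomous} $C^1$ model (replacing the exponent $1+\tfrac1K$ of Corollary~\ref{holder1K}(2) by $1+\beta$ for any $\beta<1$), and then rerun Step~2 of the proof of Theorem~\ref{schauder} with $\alpha+\beta<1+\beta$ in place of $\alpha+\beta<1+\tfrac1K$. Where you diverge is in how the model estimate is proved. You derive, via difference quotients, that the directional derivatives of $F$ solve a uniformly elliptic $\R$-linear equation $w_{\bar z}=A\,w_z+B\,\overline{w_z}$ with continuous coefficients, and then invoke the $W^{1,p}_{\loc}$-theory for Beltrami equations with VMO coefficients to get $D_zF\in C^\beta_{\loc}$ for all $\beta<1$. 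The paper derives the same linear equation (by differentiating the equation directly, which Proposition~\ref{ccc} justifies) but explicitly declines the VMO/Fredholm route, precisely because it does not readily yield constants that are locally uniform over the family of frozen solutions $F_{z_0,r}$; instead it runs a second freezing iteration on the differentiated equation, comparing $g=\partial_zF$ with the solution of a \emph{constant-coefficient} Riemann--Hilbert problem and absorbing the coefficient oscillation $\sigma(R)$ via Lemma~\ref{growth}. The payoff of the paper's route is an explicit Campanato constant depending only on $K$, $\epsilon$, $\|DF\|_{L^2}$ and the modulus of continuity of $\cH_\xi,\cH_{\bar\xi}$; the payoff of yours is brevity, at the price of a black box. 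Note that the uniformity you need in the outer iteration --- the decay $(\rho/r)^{2+2\beta}$ with one constant for \emph{all} centers $z_0$ and radii $r$ --- is exactly the point where the black box must be opened: the VMO modulus of the coefficients $A,B$ is controlled only through the a priori $\tfrac1K$-H\"older continuity of $\partial_zF$ from Corollary~\ref{holder1K}, itself controlled by $\|D_zF\|_{L^2}\leq 2K\|D_zf\|_{L^2}$. You flag this as an obstacle, correctly; it is the main content of the paper's Proposition~\ref{ccc1}.

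Two concrete corrections. First, your comparison function is prescribed by the full Dirichlet condition $F=f$ on $\partial\D(z_0,r)$. For a first-order (nonlinear) Beltrami equation this problem is overdetermined and in general not solvable; the well-posed problem is the Riemann--Hilbert condition $\Re(f-F)=0$ of Proposition~\ref{splitting}, whose solution is built from the local Cauchy/Beurling transforms and which still gives $\|D_z(f-F)\|_{L^2(\D_r)}\lesssim \mathbf{H}_\alpha(\Omega)\,r^\alpha\|\partial_zf\|_{L^2(\D_r)}$ by the isometry of $\cS_{\D_r}$, exactly as in \eqref{basicI}. Second, to convert $D_zF\in W^{1,p}_{\loc}$ into the \emph{homogeneous} decay $\|D_zF-(D_zF)_\rho\|_{L^2(\D_\rho)}\leq c\,(\rho/r)^{1+\beta}\|D_zF-(D_zF)_r\|_{L^2(\D_r)}$ you still need interior Caccioppoli/reverse-H\"older estimates for the linear equation satisfied by $D_zF$ at scale $r$, with constants again depending on the coefficient oscillation; once this is written out you are, in effect, reproducing the paper's Proposition~\ref{ccc1}. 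With these repairs the argument is correct and yields the theorem.
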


We will first study the autonomous case (Section~\ref{autosec}) and then  in the spirit of Schauder's estimates tackle the general case by perturbation.
The proof of Theorem~\ref{schauder} will be given in Section~\ref{schaudersec} and Theorem~\ref{schauderC1} is considered in Section~\ref{C1sec}.

\subsection*{Acknowledgements} The authors thank Professor Tadeusz Iwaniec for  fruitful discussions and for sharing his ideas on finding the nonlinear equation for the inverse map.

\section{Schauder-type estimates}

\subsection{Autonomous equation and integral estimates}\label{autosec}

We start with an auxiliary result for the nonlinear Beltrami equation with constant coefficients (see \cite{Sverak93, Faraco04, FaracoKristensen12}). In this case  $\cH$ depends only on the gradient variable, and the requirement \eqref{Hcondition} reduces to $\cH(0)=0$ with $|\cH(\xi_1)-\cH(\xi_2)| \leq k|\xi_1-\xi_2|$.

\begin{prop}\label{ccc}
Let $F  \in W^{1,2}_{\loc}(\Omega, \C)$ be a solution to the  autonomous  nonlinear Beltrami equation 
\begin{equation}\label{auto}
\partial_{\zbar} F(z) = \cH(\partial_z F(z))  \qquad \text{for a.e. $z\in \Omega$}.
\end{equation}
Then the directional derivatives of $F$ are $K$-quasiregular, $K = \frac{1 + k}{1 - k}$.
\end{prop}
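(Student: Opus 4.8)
The plan is to show that any directional derivative $\partial_v F = \partial_1 F \cos\theta + \partial_2 F \sin\theta$ (or equivalently, any fixed complex-linear combination $a\,\partial_z F + b\,\partial_{\zbar} F$ with appropriate $a,b$) solves a \emph{linear} Beltrami equation with an ellipticity bound governed by $k$, from which $K$-quasiregularity is immediate. The starting point is that the autonomous structure function $\cH$ is $k$-Lipschitz, hence differentiable almost everywhere by Rademacher's theorem; write $\mu(\xi) = \partial_\xi \cH(\xi)$ and $\nu(\xi) = \partial_{\bar\xi}\cH(\xi)$, so that $|\mu| + |\nu| \le k$ wherever these exist. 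Differentiating \eqref{auto} formally in a direction $v$ gives
\[
\partial_{\zbar}(\partial_v F) = \mu(\partial_z F)\,\partial_z(\partial_v F) + \nu(\partial_z F)\,\overline{\partial_z(\partial_v F)},
\]
which is a linear Beltrami-type equation for $w = \partial_v F$ with coefficients of combined modulus at most $k$, hence $w$ is $K$-quasiregular.

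The first real step is to make this differentiation rigorous despite $\cH$ being merely Lipschitz and $F$ being only $W^{1,2}_{\loc}$ with a priori just $K$-quasiregular (so $D_zF \in W^{1,p}_{\loc}$ for some $p$ slightly above $2$ by Astala's higher integrability, but not obviously more). I would first observe that $F$, being $K$-quasiregular and solving an autonomous equation, can be locally approximated: mollify to get $F_\varepsilon$, note $\partial_{\zbar} F_\varepsilon - \cH(\partial_z F_\varepsilon)$ is small in $L^2_{\loc}$, and use the linear Beltrami theory (Caccioppoli-type estimates for the difference quotients) to bootstrap. Alternatively — and this is the cleaner route — one works directly with difference quotients $F^h(z) = (F(z+h e^{i\theta}) - F(z))/|h|$. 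Since $\cH$ is $k$-Lipschitz, the function
\[
\mu_h(z) = \frac{\cH(\partial_z F(z + h e^{i\theta})) - \cH(\partial_z F(z))}{\partial_z F(z+he^{i\theta}) - \partial_z F(z)}
\]
(defined to be $0$ where the denominator vanishes), together with the analogous $\nu_h$, satisfies $|\mu_h| + |\nu_h| \le k$ pointwise, and $F^h$ solves the linear equation $\partial_{\zbar} F^h = \mu_h\,\partial_z F^h + \nu_h\,\overline{\partial_z F^h}$ exactly. Thus each $F^h$ is $K$-quasiregular with the \emph{same} $K$, and the family $\{F^h\}_h$ is locally bounded in $W^{1,2}$ by the Caccioppoli estimate for $K$-quasiregular maps combined with local boundedness of $F^h$ in $L^2$ (which follows from $F \in W^{1,2}_{\loc}$).

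The last step is a compactness/limiting argument: extract a weakly $W^{1,2}_{\loc}$-convergent subsequence $F^{h_j} \rightharpoonup G$; the limit $G$ must equal $\partial_v F$ in the distributional sense, and since the class of $K$-quasiregular mappings is closed under weak $W^{1,2}_{\loc}$ limits (a standard consequence of the compactness properties of quasiregular maps, e.g.\ \cite{AIM}), $\partial_v F$ is $K$-quasiregular. One subtlety to address is that a priori $\partial_v F$ is only a distribution, so one must first know it is represented by an $L^2_{\loc}$ function; this is exactly what the uniform $W^{1,2}_{\loc}$-bound on the $F^h$ provides, since it forces $D_z F \in W^{1,2}_{\loc}$, i.e.\ $F \in W^{2,2}_{\loc}$. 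The main obstacle I anticipate is this regularity upgrade — justifying that difference quotients of $D_zF$ are uniformly $L^2_{\loc}$-bounded — and the key to it is precisely that $\mu_h, \nu_h$ obey the \emph{uniform} bound $|\mu_h| + |\nu_h| \le k < 1$, so the Caccioppoli inequality for the linear equation solved by $F^h$ has constants independent of $h$. Once $F \in W^{2,2}_{\loc}$ is established, the pointwise a.e.\ differentiated equation for $\partial_v F$ follows by passing to the limit in the equation for $F^h$, and $K$-quasiregularity of the directional derivatives is immediate.
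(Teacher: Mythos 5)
Your proposal is correct and follows essentially the same route as the paper: take difference quotients $F_h$, use the $k$-Lipschitz property of $\cH$ to get the uniform distortion inequality $|\partial_{\zbar}F_h|\le k|\partial_z F_h|$, apply the ($h$-independent) Caccioppoli estimate to obtain a uniform $W^{1,2}_{\loc}$ bound on $F_h$ and hence $D_z F\in W^{1,2}_{\loc}$, and then pass to the limit to conclude $K$-quasiregularity of $\partial_e F$. The only cosmetic difference is that you package the distortion bound as a linear Beltrami equation with coefficients $\mu_h,\nu_h$ and invoke closure of $K$-quasiregular maps under weak $W^{1,2}_{\loc}$ limits, whereas the paper reads the inequality $|\partial_{\zbar}F_h|\le k|\partial_z F_h|$ directly and lets $h\to 0$; also, the opening Rademacher/formal-differentiation discussion is not needed for the argument.
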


\begin{proof}
Let $h > 0$. The difference quotients 
$$
F_h(z) := \frac{F(z + he) - F(z)}{h}, \qquad |e| = 1
$$
are $K$-quasiregular. Indeed, by \eqref{auto},
\begin{equation}\label{qrforDF}
\begin{aligned}
|\partial_{\zbar} F_h(z)| &= \left| \frac{\cH(\partial_z F(z + he)) - \cH( \partial_z F(z))}{h}\right|\\ &\leq k\,\frac{|\partial_z F(z + he) - \partial_z F(z)|}{|h|}
= k\,|\partial_z F_h(z)|.
\end{aligned}
\end{equation} 
Now, we have a Caccioppoli estimate for $F_h$, see e.g. \cite[Theorem 5.4.2]{AIM}. For $\rho < R$ and any constant $c$
\begin{equation}\label{caccioppoli1}
\int_{\D_{\rho}} |D_z F_h|^2 \leq  \frac{c(K)}{(R - \rho)^2}\int_{\D_{R}} |F_h - c|^2,
\end{equation}
where we denote $\D_r = \D(z_0, r)$. 
Thus $c(K)\fint_{\D_{R}} (|D_z F|^2 + 1)$ is a uniform bound for the derivative of the difference quotient for the range $0 < \rho \leq \frac{R}{2}$. Hence the directional derivative $\partial_e F \in W^{1, 2}_{\loc}(\Omega, \C)$. Further, letting $h \to 0$ in \eqref{qrforDF}, we see that $\partial_e  F(z)$ is $K$-quasiregular.
\end{proof}
Therefore,  the directional derivatives inherit the properties of $K$-quasiregular maps. We will need few integral estimates that we prove next.

\begin{prop}\label{integralestimate}
Let $g \in W^{1, 2}_{\loc}(\Omega, \C)$ be $K$-quasiregular. Then 
\begin{equation}\label{g1}
\| g \|_{L^2(\D(z_0, \rho))} \leq c(K)\,\frac{\rho}{R}\;\| g \|_{L^2(\D(z_0, R))}
\end{equation}
for $\D(z_0, \rho) \subset \D(z_0, R) \subset \Omega$.
Moreover, $g$ is locally $\frac1K$-H\"older continuous; formulated in a Morrey-Campanato form we have  
\begin{equation}\label{normbound}
\|g-g_\rho\|_{L^2(\D(z_0,\rho))} \leq  c(K)\left(\frac\rho{R}\right)^{1 + \frac1K}\,\|g-g_{R}\|_{L^2(\D(z_0,R))}
\end{equation}
for any $\rho\leq R$, where $g_r = \fint_{\D(z_0, r)} g$. 
\end{prop}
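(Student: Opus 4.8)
The plan is to read off both inequalities from two classical, scale-invariant interior estimates for $K$-quasiregular maps --- interior local boundedness (a sub-mean value property) and sharp $\tfrac1K$-H\"older continuity --- together with the elementary observation that $g_\rho=\fint_{\D(z_0,\rho)}g$ is the best constant $L^2(\D(z_0,\rho))$-approximation of $g$, so that $\|g-g_\rho\|_{L^2(\D(z_0,\rho))}\le\|g-c\|_{L^2(\D(z_0,\rho))}$ for every $c\in\C$. Using the latter, the range $R/2<\rho\le R$ is trivial in both \eqref{g1} and \eqref{normbound} (bound $\D(z_0,\rho)$ by $\D(z_0,R)$ and absorb the bounded factor $R/\rho$ into the constant), so in what follows one may assume $\rho\le R/2$.

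For the two ingredients I would cite \cite{AIM}. First, $g$ is itself $K$-quasiregular, hence satisfies a Caccioppoli inequality as in \eqref{caccioppoli1}; Moser iteration --- equivalently, the components of $g$ solve uniformly elliptic divergence-form equations with ellipticity controlled by $K$, so that $|g|$ is a subsolution --- gives the sub-mean value bound
\[
\sup_{\D(z_0,R/2)}|g|\ \le\ c(K)\Bigl(\fint_{\D(z_0,R)}|g|^2\Bigr)^{1/2}.
\]
Second, via the Stoilow factorisation $g=h\circ\chi$ with $\chi$ a $K$-quasiconformal homeomorphism and $h$ holomorphic, the sharp Mori modulus of continuity of $\chi$ (locally $\tfrac1K$-H\"older, and sharply so) together with the local Lipschitz bound for $h$ yields the scale-invariant interior estimate
\[
[g]_{C^{1/K}(\D(z_0,R/2))}\ \le\ \frac{c(K)}{R^{1/K}}\,\sup_{\D(z_0,3R/4)}|g|.
\]
It is important that the exponent here is exactly $\tfrac1K$ rather than merely the De Giorgi--Nash exponent; this is precisely the feature responsible for the restriction $\gamma<\tfrac1K$ appearing later in the paper.

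Granting these, \eqref{g1} for $\rho\le R/2$ is immediate from
\[
\|g\|_{L^2(\D(z_0,\rho))}^2\ \le\ \pi\rho^2\sup_{\D(z_0,R/2)}|g|^2\ \le\ c(K)\Bigl(\tfrac\rho R\Bigr)^2\|g\|_{L^2(\D(z_0,R))}^2 ,
\]
while for $R/2<\rho\le R$ it is trivial. For \eqref{normbound} I apply both ingredients to the $K$-quasiregular map $g-g_R$. For $\rho\le R/2$ one has, since $\D(z_0,\rho)\subseteq\D(z_0,R/2)$,
\[
\|g-g_\rho\|_{L^2(\D(z_0,\rho))}\ \le\ \sqrt\pi\,\rho\,\mathrm{osc}_{\D(z_0,\rho)}g\ \le\ c\,\rho^{1+\frac1K}\,[g-g_R]_{C^{1/K}(\D(z_0,R/2))},
\]
and then $[g-g_R]_{C^{1/K}(\D(z_0,R/2))}\le c(K)R^{-1/K}\sup_{\D(z_0,3R/4)}|g-g_R|\le c(K)R^{-1-\frac1K}\|g-g_R\|_{L^2(\D(z_0,R))}$, which combine to give the factor $(\rho/R)^{1+1/K}$; the range $R/2<\rho\le R$ is again trivial.

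I do not anticipate a real obstacle, since all the analytic input is classical; the only care needed is bookkeeping. After a dilation one may take $z_0=0$ and $R=1$, and the point is then to invoke each intermediate estimate in scale-invariant form so that the geometric factors come out with the exponents $1$ and $1+\tfrac1K$, and to handle the transition between $\rho\le R/2$ and $R/2<\rho\le R$ cleanly.
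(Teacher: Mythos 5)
Your proof is correct, and both of your key ingredients are classical and in scope (cf.\ \cite{AIM}), but the route is noticeably different from the paper's, especially for \eqref{normbound}.

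For \eqref{g1}, your structure coincides with the paper's: both pass through an interior pointwise bound on a half-sized disk and then compare areas. The paper obtains the pointwise bound via Caccioppoli plus a weak reverse H\"older inequality (so $D_zg\in L^p_{\loc}$ for some $p>2$) followed by Morrey's Sobolev embedding, while you invoke local boundedness directly as a sub-mean value estimate (Moser iteration / $|g|$ a subsolution). These are interchangeable classical routes and both yield the scale-invariant form needed.

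For \eqref{normbound}, you take a genuinely different path. The paper reproduces Morrey's isoperimetric argument: one shows $r\mapsto r^{-2/K}\int_{\D_r}J(z,g)\,dA$ is non-decreasing via the Sobolev isoperimetric inequality, H\"older and the pointwise inequality $|D_zg|^2\le K J(z,g)$; this yields the Jacobian-growth estimate $\int_{\D_\rho}J\le(\rho/R)^{2/K}\int_{\D_R}J$, which is then converted to the $L^2$-Campanato form by Poincar\'e, quasiregularity and Caccioppoli. You instead pass through the sharp modulus-of-continuity statement (Sto\"ilow factorisation $g=h\circ\chi$ and Mori's $\tfrac1K$-H\"older theorem for the quasiconformal factor, plus Cauchy estimates for the holomorphic factor), applied to the quasiregular map $g-g_R$, and then use interior local boundedness once more to pass back to the $L^2$-norm on $\D_R$. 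Both arguments produce exactly the exponent $\tfrac1K$; the paper's choice is deliberate ("for later purposes we recall how this follows\dots") because the isoperimetric mechanism keeps everything on the $L^2/$Jacobian level, which is the currency used in the subsequent Morrey--Campanato iterations, whereas yours detours through pointwise H\"older norms and would require the scale-invariant bookkeeping you flag at the end. Your trivial handling of the range $R/2<\rho\le R$ via the best-constant property of $g_\rho$ matches the paper's (implicit) treatment.

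One small remark: the step $[g-g_R]_{C^{1/K}(\D_{R/2})}\le c(K)R^{-1/K}\sup_{\D_{3R/4}}|g-g_R|$ via Sto\"ilow does require a quantitative normalisation (the image of $\D_{3R/4}$ under the quasiconformal factor must contain a disk of radius depending only on $K$ around the image of the smaller disk, so that the Cauchy estimate for $h$ is controlled by $K$ alone); this follows from quasisymmetry/Mori estimates, and it would be worth saying so explicitly, but it is not a gap.
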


\begin{proof} We start by proving \eqref{g1}. Denote $\D_r = \D(z_0, r)$. Since $g$ is $K$-quasiregular,  we have by  Caccioppoli's inequality and weak reverse H\"older inequalities, \cite[Theorem 5.4.2]{AIM}, \cite[Proposition 1]{nolder}, for $\frac{2K}{K+1} < p < \frac{2K}{K-1}$,
\begin{equation}\label{cacci}
\|D_z g \|_{L^p(\D_{R/2})} \leq c_0(p, K, R)\, \| g \|_{L^p(\D_{2R/3})} \leq c_1(p, K, R)\,\| g \|_{L^2(\D_R)}.
\end{equation}
Now, for $\rho \leq \frac{R}{2}$,
\begin{align*}
\|g\|_{L^2(\D_\rho)} &\leq \sqrt{\pi}\,\rho\,\sup_{\D_\rho}|g| \leq c(R)\,\rho\,\|g \|_{W^{1,p}(\D_{R/2})}\\
& \leq c(p, K, R)\,\rho\, \|g\|_{L^2(\D_R)},
\end{align*}
where the second to the last inequality follows from the Sobolev embedding, by Morrey's inequality (choose $p > 2$), and the last one from Caccioppoli's inequality \eqref{cacci}. By rescaling, one sees that $c(p, K, R) = c(p, K)R^{-1}$. Hence
\begin{equation*}
\| g \|_{L^2(\D(z_0, \rho))} \leq c(p, K)\,\frac{\rho}{R}\;\| g \|_{L^2(\D(z_0, R))},
\end{equation*}
for $\rho \leq R$ (above we show the estimate for $\rho \leq  \frac{R}{2}$ and it is trivial for  $\frac{R}{2} < \rho \leq R$ with possibly a bigger constant). We have thus proved the integral estimate \eqref{g1}.

\medskip

The $\frac{1}{K}$-H\"older inequality of $K$-quasiregular maps goes back to Morrey \cite[Section 3.10]{mor, AIM}. For later purposes we recall how this follows  using the isoperimetric inequality for Sobolev spaces, in 
combination with Caccioppoli's and Poincar\'e's inequalities, and the pointwise equivalence of $|D_z g(z)|^2$ and $J(z, g)$ for quasiregular maps.  

We have, by the isoperimetric inequality in the Sobolev space and the H\"older inequality, that the mapping $\psi(r) :=  r^{-\frac2K}\int_{\D_r}J(z, g)\,dA(z)$ is non-decreasing.  Indeed, \begin{equation*}\label{nondec}
\aligned
&\int_{\D_r}J(z, g)\,dA(z) \leq \frac{1}{4\pi}\left(\int_{\partial \D_r}|D_z g(z)|\,dz\right)^2\\
&\quad\leq \frac{K|\partial \D_r|}{4\pi}\int_{\partial \D_r}\frac{|D_z g(z)|^2}{K}\,dz \leq \frac{Kr}{2}\int_{\partial \D_r}J(z, g)\,dz,
\endaligned
\end{equation*}
for $\D_r \subset \Omega$, where the last inequality follows by quasiregularity (i.e., $|D_z g(z)|^2 \leq K J(z, g)$ almost everywhere). In other words $\psi'(r) \geq 0$.

The non-decreasing of $\psi$ implies that 
\begin{equation}\label{Jacgrow}
\int_{\D_\rho}J(z, g)\,dA(z)\leq \left(\frac{\rho}{R}\right)^{2/K}\int_{\D_R}J(z, g)\,dA(z),
\end{equation}
for $\D_\rho \subset \D_R \subset \Omega$. Now, by Poincar\'e's inequality, $K$-quasiregularity, \eqref{Jacgrow}, and  Caccioppoli's inequality \eqref{caccioppoli1}, we get for $\rho \leq \frac{R}{2}$
\begin{equation}\label{qrnorm}
\aligned
&\|g - g_\rho\|_{L^2(\D_\rho)} \leq c\,\rho \|D_z g\|_{L^2(\D_\rho)} \leq c\,\rho\left(\int_{\D_\rho}K\;J(z, g)\right)^\frac12\\
&\quad\leq  c(K)\,\frac{\rho^{1 + 1/K}}{R^{1/K}}\left(\int_{\D_{R/2}}J(z, g)\right)^\frac12\leq c(K)\,\frac{\rho^{1 + 1/K}}{R^{1/K}}\|D_z g\|_{L^2(\D_{R/2})}\\
&\quad\leq c(K)\left(\frac{\rho}{R}\right)^{1 + 1/K}\|g - g_R\|_{L^2(\D_R)}.
\endaligned
\end{equation}
For $\frac{R}{2} < \rho \leq R$, \eqref{qrnorm} holds trivially. Hence  we have shown the integral estimate \eqref{normbound}.
\end{proof}

 The formulation of  Proposition~\ref{integralestimate}  will be particularly useful when applied to the derivatives $D_zF$ of a solution to the autonomous equation \eqref{auto}.

\begin{cor}\label{holder1K}
 If $F$ is as in Proposition~\ref{ccc}, the derivative $D_z F$ is  locally $\frac1K$-H\"older continuous. Moreover,
\begin{enumerate}
\item for every $\D(z_0, \rho) \subset \D(z_0, R) \subset \Omega$,
$$
\| D_z F \|_{L^2(\D(z_0, \rho))} \leq c(K)\,\frac{\rho}{R}\;\| D_z F \|_{L^2(\D(z_0, R))}.
$$
\item For every $\D(z_0, \rho) \subset \D(z_0, R) \subset \Omega$,
 $$
 \|D_z F-(D_z F)_\rho\|_{L^2(\D(z_0,\rho))} \leq  c(K)\left(\frac\rho{R}\right)^{1 + \frac1K}\,\|D_z F-(D_z F)_{R}\|_{L^2(\D(z_0,R))}
$$
 where $(D_z F)_r = \fint_{\D(z_0, r)} D_z F$.
\end{enumerate}
\end{cor}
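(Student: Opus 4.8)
The plan is simply to combine Proposition~\ref{ccc} with Proposition~\ref{integralestimate}. By Proposition~\ref{ccc}, each directional derivative $\partial_e F$, $|e| = 1$, belongs to $W^{1,2}_{\loc}(\Omega,\C)$ and is $K$-quasiregular; taking $e = 1$ and $e = i$ shows in particular that $\partial_x F$ and $\partial_y F$ are $K$-quasiregular. Now observe that, pointwise,
\[
\partial_z F = \tfrac12\bigl(\partial_x F - i\,\partial_y F\bigr), \qquad \partial_{\zbar} F = \tfrac12\bigl(\partial_x F + i\,\partial_y F\bigr),
\]
and conversely $\partial_x F = \partial_z F + \partial_{\zbar} F$, $\partial_y F = i(\partial_z F - \partial_{\zbar} F)$. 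Consequently the quantity $\|D_z F(z)\|$ is, at every point and with universal constants, comparable to $\bigl(|\partial_x F(z)|^2 + |\partial_y F(z)|^2\bigr)^{1/2}$, and therefore for every disc $\D(z_0,r) \subset \Omega$
\[
\|D_z F\|_{L^2(\D(z_0,r))} \asymp \Bigl(\|\partial_x F\|_{L^2(\D(z_0,r))}^2 + \|\partial_y F\|_{L^2(\D(z_0,r))}^2\Bigr)^{1/2}.
\]
Since the average commutes with real linear combinations, the same two-sided comparison holds with $D_z F$, $\partial_x F$, $\partial_y F$ replaced by $D_z F - (D_z F)_r$, $\partial_x F - (\partial_x F)_r$, $\partial_y F - (\partial_y F)_r$, where $(D_z F)_r = \fint_{\D(z_0,r)} D_z F$ and similarly for the partials.

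With these elementary reductions the corollary is immediate. Estimate (1) follows by applying the integral estimate \eqref{g1} of Proposition~\ref{integralestimate} to the $K$-quasiregular maps $g = \partial_x F$ and $g = \partial_y F$, adding the resulting inequalities, and using the comparison above at radii $\rho$ and $R$. Estimate (2) follows in exactly the same way from \eqref{normbound} applied to $g = \partial_x F$ and $g = \partial_y F$. Finally, the local $\tfrac1K$-H\"older continuity of $D_z F$ is inherited from that of $\partial_x F$ and $\partial_y F$, which is part of Proposition~\ref{integralestimate} (alternatively, iterating \eqref{normbound} with $R$ fixed gives $\|D_z F - (D_z F)_\rho\|_{L^2(\D(z_0,\rho))}^2 \leq C\,\rho^{2 + 2/K}$, whence the Morrey--Campanato embedding in the plane yields $D_z F \in C^{0,1/K}_{\loc}$), together with the fact that a fixed real linear combination of $\tfrac1K$-H\"older functions is again $\tfrac1K$-H\"older.

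There is no real obstacle here; the only point deserving attention is that in the genuinely nonlinear equation \eqref{auto} one should \emph{not} expect the complex derivative $\partial_z F$ (or $\partial_{\zbar} F$) to be $K$-quasiregular on its own — it is merely a linear combination of the honestly quasiregular real directional derivatives $\partial_x F$, $\partial_y F$ — so Proposition~\ref{integralestimate} has to be invoked for $\partial_x F$ and $\partial_y F$ individually, the passage to $D_z F$ being carried out afterwards through the elementary comparison described above.
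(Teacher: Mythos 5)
Your proof is correct and follows essentially the same route as the paper: the paper's own proof likewise reduces the corollary to Proposition~\ref{integralestimate} applied to the $K$-quasiregular directional derivatives $\partial_{e_j}F$, using the pointwise identity $\|D_zF\|^2=\sum_{j}|\partial_{e_j}F|^2$ (and its version with means subtracted) to pass back to $D_zF$. Your closing caveat that $\partial_zF$ itself need not be quasiregular, so the proposition must be applied to the real directional derivatives individually, is exactly the right point to flag.
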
 

\begin{proof}
Since for the Hilbert-Schmidt norm $\| D_z F(z) \|^2 = \sum_{j=1}^2 |D_z F(z)\,e_j |^2 =  \sum_{j=1}^2 | \partial_{e_j} F(z)|^2$ and $\| D_z F - (D_z F)_r \|^2 = \sum_{j=1}^2 | \partial_{e_j} F - (\partial_{e_j} F)_r|^2$, the corollary follows from Proposition~\ref{integralestimate}  and the quasiregularity of the directional derivatives.
\end{proof}

\subsection{Riemann-Hilbert problem}

The solution of the Riemann-Hilbert problem is well-known; the proof is based on the local versions of the classical Cauchy transform and the Beurling transform, see, for instance, \cite[Proposition~2]{tartar}. We sketch a proof for the reader's convenience in the situation we need in this paper.

\begin{prop}\label{splitting}
Let  $f$ be a solution to the nonlinear Beltrami equation \eqref{hqrjac}, and suppose $\D(z_0, R) \Subset \Omega$. Then there exists a unique solution $F \in W^{1, 2}(\D(z_0, R), \C)$ to the following local Riemann-Hilbert problem for the autonomous equation
\begin{equation}\label{Split}
\begin{cases}
\partial_{\zbar} F(z) = \cH(z_0,\partial_z F(z))  & \text{a.e. $z \in \D(z_0, R)$}, \\
\Re(f - F) = 0 & \text{on $\partial \D(z_0, R)$}.
\end{cases}
\end{equation}
Furthermore, $\| \partial_{\zbar}F - \partial_{\zbar} f\|_{L^2(\D_R)} = \| \partial_{z}F - \partial_{z} f\|_{L^2(\D_R)}$ and we have a norm bound
\begin{equation}\label{L2bound}
\|D_z F \|_{L^2(\D_R)} \leq 2 K \|D_z f \|_{L^2(\D_R)}.
\end{equation}
\end{prop}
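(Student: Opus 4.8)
The plan is to reduce the Riemann--Hilbert problem \eqref{Split} to a fixed point problem for the Beurling transform on the disk, which is the standard route for the autonomous nonlinear Beltrami equation. First I would set up the local Cauchy and Beurling operators $C_R$ and $S_R$ adapted to $\D(z_0,R)$: recall that $C_R$ maps $L^2$ into $W^{1,2}$, that $\partial_z C_R = S_R$ and $\partial_{\zbar} C_R = \mathrm{Id}$, and that $S_R$ is an $L^2$-isometry on the subspace of functions with suitable boundary behaviour (this is exactly the point where the constraint $\Re(f-F)=0$ on $\partial\D(z_0,R)$ enters, since it selects the correct range for $C_R$). Writing $F = f + w$ with $w$ the unknown correction vanishing in real part on the boundary, one rewrites \eqref{Split} as an equation for $\omega := \partial_z w$, of the form $\omega = \Phi(\omega)$, where $\Phi$ involves composing $S_R$ with $\xi \mapsto \cH(z_0,\xi)$ and absorbing the data coming from $f$. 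Since $\cH(z_0,\cdot)$ is $k$-Lipschitz with $k<1$ and $S_R$ is an isometry, $\Phi$ is a contraction on $L^2(\D_R)$ with constant $k$, so Banach's fixed point theorem gives existence and uniqueness of $\omega$, hence of $F$; one then checks $F \in W^{1,2}(\D_R,\C)$ by the mapping properties of $C_R$.

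Next I would establish the identity $\|\partial_{\zbar}F - \partial_{\zbar}f\|_{L^2(\D_R)} = \|\partial_z F - \partial_z f\|_{L^2(\D_R)}$. Setting $w = F - f$, the boundary condition $\Re w = 0$ on $\partial\D_R$ means $w$ lies in the subspace on which the Beurling transform $S_R$ acts as an $L^2$-isometry, and since $\partial_z w = S_R(\partial_{\zbar} w)$ for such $w$, the two $L^2$-norms coincide. This is really a restatement of the isometry property already used for the contraction, now read off the solution itself.

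Finally, for the norm bound \eqref{L2bound}, I would argue as follows. From $J(z,F) \geq 0$ a.e.\ (quasiregularity of $F$) one has $\|D_zF\|_{L^2(\D_R)}^2 = \|\partial_z F\|_{L^2}^2 + \|\partial_{\zbar}F\|_{L^2}^2 \leq 2\|\partial_z F\|_{L^2}^2$ up to the usual normalisation of the Hilbert--Schmidt norm; more precisely I would use $|\partial_{\zbar}F| \leq k|\partial_z F| \leq |\partial_z F|$ pointwise from \eqref{auto}. Then write $\partial_z F = \partial_z f + (\partial_z F - \partial_z f)$ and use the equality of norms just proved together with $|\partial_{\zbar}f| \leq k|\partial_z f|$ to estimate $\|\partial_z F - \partial_z f\|_{L^2} = \|\partial_{\zbar}F - \partial_{\zbar}f\|_{L^2} \leq \|\partial_{\zbar}F\|_{L^2} + \|\partial_{\zbar}f\|_{L^2} \leq k(\|\partial_z F\|_{L^2} + \|\partial_z f\|_{L^2})$, which upon rearranging bounds $\|\partial_z F\|_{L^2}$ by a $K$-dependent multiple of $\|\partial_z f\|_{L^2} \leq \|D_z f\|_{L^2}$; tracking constants carefully yields the factor $2K$. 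The main obstacle I expect is purely bookkeeping: getting the local operators $C_R$, $S_R$ and their isometry/boundary properties stated precisely enough that both the contraction argument and the norm identity come out cleanly — the analytic content is entirely standard, so I would cite \cite{tartar} for the operator construction and keep this part brief.
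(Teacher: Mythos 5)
Your plan is correct and follows essentially the same route as the paper: set up the local Cauchy and Beurling transforms on $\D_R$, use the boundary condition $\Re(F-f)=0$ to land on the subspace where $\cS_{\D_R}$ is an $L^2$-isometry, and run a Banach fixed-point argument using the $k$-Lipschitz property of $\cH(z_0,\cdot)$. Whether one iterates in $\partial_z w$ (as you suggest) or in $\partial_{\zbar} w$ (as the paper does, calling it $\Psi$) is immaterial.

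The one genuine variation is in how you obtain the bound \eqref{L2bound}. The paper derives it directly from the fixed-point equation: it bounds $\|\Psi\|$ by $\frac{2k}{1-k}\|\partial_z f\|$ using the contractivity estimate on $\cB\Psi$, and then assembles $\|D_zF\|\leq 2\|\Psi\|+2\|D_zf\|$. You instead invoke the ellipticity $|\partial_{\bar z}F|\leq k|\partial_z F|$ together with the norm identity, rearrange to get $\|\partial_z F\|\leq K\|\partial_z f\|$, and conclude $\|D_zF\|\leq\sqrt{1+k^2}\,\|\partial_z F\|\leq\sqrt2\,K\,\|D_zf\|\leq 2K\|D_zf\|$. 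This is a slightly cleaner bookkeeping that does not look inside the fixed-point iteration and, incidentally, yields the marginally better constant $\sqrt2\,K$; both arguments are correct and give the stated estimate.
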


\begin{proof}
The local Cauchy transform in $\D_R:= \D(z_0, R)$ is obtained from the Cauchy transform on the unit disk by conformal change of variables (see, e.g., \cite[Section 6.1]{G-cl}). Namely, the local Cauchy transform of $\psi \in L^2(\D_R, \C)$ is given by
$$
(\cC_{\D_R} \psi)(z) = \frac1{\pi} \int_{\Omega} \left(\frac{\psi(\zeta)}{z - \zeta} -\frac{(z - z_0)\,\overline{\psi(\zeta)}}{R^2 - (z - z_0)\, \overline{(\zeta - z_0)}} \right)dA(\zeta)
$$
and the local Beurling transform by $\cS_{\D_R} \psi = \partial_z\, \cC_{\D_R} \psi$, that is,
$$
(\cS_{\D_R} \psi)(z) = -\frac{1}{\pi} \int_{\Omega} \left(\frac{\psi(\zeta)}{(z - \zeta)^2} +\frac{R^2\,\overline{\psi(\zeta)}}{(R^2 - (z - z_0)\, \overline{(\zeta - z_0)})^2} \right)dA(\zeta).
$$
By definition, $\partial_z\, \cC_{\D_R} \psi = \cS_{\D_R} \psi$, $\partial_{\zbar}\, \cC_{\D_R} \psi =  \psi$, and $\cC_{\D_R} \psi \in W^{1, 2}(\D_R, \C) \cap C(\overline{\D_R}, \C)$. 

As the integrand in the definition of $\cC_{\D_R}$ is purely imaginary on the boundary, $\Re(\cC_{\D_R} \psi) = 0$ on $\partial\D_R$, i.e., $\Re(\cC_{\D_R} \psi)$ is in the closure of $C^\infty_0(\D_R, \C)$ in $W^{1,2}(\D_R, \C)$.  Now,  we can use Green's theorem, \cite[Theorem~2.9.1]{AIM}, to see that the local Beurling transform $\cS_{\D_R} : L^2(\D_R, \C) \to L^2(\D_R, \C)$ is an isometry, that is,
$$
\|\cS_{\D_R}\psi\|_{L^2(\D_R)} = \|\psi\|_{L^2(\D_R)}.
$$
Indeed, let $\cC_{\D_R} \psi = u + iv$,
$$
\aligned
\int_{\D_R} |\cS_{\D_R}\psi|^2 - |\psi|^2 &= \int_{\D_R} |\partial_z\, \cC_{\D_R} \psi|^2 - |\partial_{\zbar}\, \cC_{\D_R} \psi|^2 = \int_{\D_R} J(z, \cC_{\D_R} \psi)\\
&= -\frac{i}{2}\int_{\D_R} \partial_z u\; \partial_{\zbar}v - \partial_{\zbar} u\; \partial_z v = \frac{1}{4}\int_{\partial\D_R} u(\partial_z v + \partial_{\zbar}v) = 0,
\endaligned
$$
as $u = 0$ on $\partial\D_R$.

The isometry of $\cS_{\D_R}$ implies that the Beltrami operator
$$
(\cB\psi)(z) = \cH(z_0,(\cS_{\D_R}\psi)(z) + \partial_z f(z)) - \cH(z,\partial_z f(z)) 
$$
is a contraction on $L^2(\D_R, \C)$;
$$
\aligned
&\|\cB\psi_1 - \cB\psi_2\|_{L^2(\D_R)} \\& \ \ \ = \|\cH(z_0,(\cS_{\D_R}\psi_1)(z) + \partial_z f(z)) - \cH(z_0,(\cS_{\D_R}\psi_2)(z) + \partial_z f(z))\|_{L^2(\D_R)}\\
&\ \ \ \leq k \|\psi_1 - \psi_2\|_{L^2(\D_R)}.
\endaligned
$$
Thus there is a unique fixed point $\Psi\in L^2(\D_R, \C)$ of $\cB$.

We define $F = \cC_{\D_R}\Psi + f$, since then $\partial_{\zbar}F = \Psi + \partial_{\zbar} f$, $\partial_z F = \cS_{\D_R}\Psi + \partial_z f$, and $\Re F = \Re (\cC_{\D_R}\Psi) + \Re f = \Re f$ (i.e., $F$ solves \eqref{Split}).

The $L^2$-estimate is obtained in the similar fashion. For the fixed point $\Psi$
$$
\aligned
\| \Psi \|_{L^2(\D_R)} &= \|\cB\Psi\|_{L^2(\D_R)}\\
&= \|\cH(z_0,(\cS_{\D_R}\Psi)(z) + \partial_z f(z)) - \cH(z, \partial_z f(z)) 
\|_{L^2(\D_R)}\\
&\leq k \|\cS_{\D_R}\Psi\|_{L^2(\D_R)} + 2k \|\partial_z f\|_{L^2(\D_R)}.
\endaligned
$$
Now, using that $\cS_{\D_R}$ is also an isometry,
$$
\aligned
\| D_z F \|_{L^2(\D_R)} &\leq \| \Psi \|_{L^2(\D_R)} + \| \cS_{\D_R}\Psi \|_{L^2(\D_R)} + 2\| D_z f \|_{L^2(\D_R)} \\
& \leq \left(\frac{4k}{1- k} + 2\right)\|D_z f\|_{L^2(\D_R)} = 2\;\frac{1+k}{1- k}\|D_z f\|_{L^2(\D_R)}.\endaligned
$$
\end{proof}

\subsection{Schauder estimates by freezing the coefficients}\label{schaudersec}

We will use the Morrey-Campanato integral characterisation of  H\"older continuous functions
\cite[Chapter III, Theorem 1.2, p. 70, and Theorem 1.3, p. 72]{Gia}. Namely,  the integral estimate
\begin{equation}\label{morreycamp}
\|g - g_{\rho} \|_{L^2(\D(z_0, \rho))} \leq M\,\rho^{1 + \gamma}
\end{equation}
for $z_0 \in \Omega$ and every $\rho \leq \min\{R_0, \mathrm{dist}(z_0, \partial\Omega)\}$ (for some $R_0$) gives the local $\gamma$-H\"older continuity of $g$ in $\Omega$. Moreover, for $\tilde{\Omega}\Subset\Omega$, \eqref{morreycamp} implies the H\"older seminorm bound
\begin{equation}\label{semi}
[g]_{C^{\gamma}(\tilde{\Omega})} \leq c(\gamma, \tilde{\Omega})\,M
\end{equation}
and the $L^\infty$-bound
\begin{equation}\label{sup}
\| g \|_{L^\infty(\tilde{\Omega})} \leq c(\gamma, \tilde{\Omega})\left(M\,\mathrm{diam}(\Omega)^\gamma + \|g\|_{L^2(\Omega)}\right)\!,
\end{equation}
see the proofs of Proposition 1.2 and Theorem 1.2 on pages 68--72 of \cite[Chapter III]{Gia}.

Next, we apply the ideas of freezing the coefficients to get few basic estimates for solutions to \eqref{hqrjac}. We start with the following

\begin{lem} \label{basicII}
Suppose $\cH$ satisfies the conditions \eqref{Hcondition} and let $f \in W^{1, 2}_{\loc}(\Omega, \C)$ be a solution to 
\begin{equation*}
\partial_{\zbar} f(z) = \cH(z, \partial_z f(z))  \qquad \text{a.e. \, in} \;\;  \Omega.
\end{equation*}
If $\D(z_0, R) \Subset \Omega$, then for each $0 < \rho \leq R$ we have
$$
\aligned \| D_z f - (D_zf )_\rho \|_{L^2(\D_\rho)}  &\leq  c(K) \left(\frac{\rho}{R}\right)^{1+\frac1K} \|D_z f - (D_zf )_R\|_{L^2(\D_R)}\\
&\quad + c(K) \,  \mathbf{H}_\alpha(\Omega)\, R^\alpha\, \| \partial_z f \|_{L^2(\D_R)}, 
\endaligned
$$
where $\D_r = \D(z_0, r)$.
\end{lem}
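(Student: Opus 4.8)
The plan is to apply the classical Schauder freezing argument: compare $f$ on the disk $\D_R = \D(z_0,R)$ to the solution $F$ of the autonomous (frozen-coefficient) equation produced by Proposition~\ref{splitting}, use the Morrey--Campanato decay estimate for $D_z F$ from Corollary~\ref{holder1K}, and control the error term $D_z f - D_z F$ by the H\"older continuity of $\cH$ in the first variable together with the $L^2$-isometry of the local Beurling transform. More precisely, let $F$ solve the Riemann--Hilbert problem \eqref{Split} on $\D_R$. The key structural fact is that $\partial_z f - \partial_z F = \cS_{\D_R}(\partial_{\zbar} f - \partial_{\zbar} F)$ (since $f - F$ has vanishing real part on $\partial \D_R$, so $f - F = \cC_{\D_R}(\partial_{\zbar} f - \partial_{\zbar} F)$ by uniqueness of the Riemann--Hilbert solution), whence by isometry $\|D_z f - D_z F\|_{L^2(\D_R)} \le 2\|\partial_{\zbar} f - \partial_{\zbar} F\|_{L^2(\D_R)}$.

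The next step is to estimate $\partial_{\zbar} f - \partial_{\zbar} F$ pointwise. Using the two equations and the structural hypothesis \eqref{Hcondition},
\begin{align*}
|\partial_{\zbar} f - \partial_{\zbar} F| &= |\cH(z, \partial_z f) - \cH(z_0, \partial_z F)|\\
&\le |\cH(z, \partial_z f) - \cH(z_0, \partial_z f)| + |\cH(z_0, \partial_z f) - \cH(z_0, \partial_z F)|\\
&\le \mathbf{H}_\alpha(\Omega)\, |z - z_0|^\alpha\, |\partial_z f| + k\, |\partial_z f - \partial_z F|\\
&\le \mathbf{H}_\alpha(\Omega)\, R^\alpha\, |\partial_z f| + k\, |D_z f - D_z F|.
\end{align*}
Integrating over $\D_R$, combining with the isometry identity, and absorbing the term $2k\|D_z f - D_z F\|_{L^2(\D_R)}$ (legitimate since $2k < 2$, or more carefully since one can bound $\|\partial_{\zbar} f - \partial_{\zbar} F\|_{L^2} = \|\partial_z f - \partial_z F\|_{L^2}$ and $k<1$) yields
\[
\|D_z f - D_z F\|_{L^2(\D_R)} \le c(K)\, \mathbf{H}_\alpha(\Omega)\, R^\alpha\, \|\partial_z f\|_{L^2(\D_R)}.
\]

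Finally I assemble the pieces by the standard triangle-inequality manoeuvre. For $0 < \rho \le R$, writing $(D_z f)_\rho$ for the average over $\D_\rho$ and using that the average minimises the $L^2$ distance to constants,
\[
\|D_z f - (D_z f)_\rho\|_{L^2(\D_\rho)} \le \|D_z F - (D_z F)_\rho\|_{L^2(\D_\rho)} + 2\|D_z f - D_z F\|_{L^2(\D_\rho)}.
\]
The first term is handled by Corollary~\ref{holder1K}(2), giving decay $c(K)(\rho/R)^{1+1/K}\|D_z F - (D_z F)_R\|_{L^2(\D_R)}$, and then $\|D_z F - (D_z F)_R\|_{L^2(\D_R)} \le \|D_z f - (D_z f)_R\|_{L^2(\D_R)} + 2\|D_z f - D_z F\|_{L^2(\D_R)}$ reduces everything to the $L^2$-norm of $D_z f - (D_z f)_R$ plus the error term already estimated (here the $L^2(\D_\rho)$-norm of the error is bounded by the $L^2(\D_R)$-norm since $\rho \le R$, and $R^\alpha$ dominates $\rho^\alpha$). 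The main obstacle, and the reason this needs care rather than being purely formal, is establishing the identity $\partial_z f - \partial_z F = \cS_{\D_R}(\partial_{\zbar} f - \partial_{\zbar} F)$ with the correct constant and then cleanly absorbing the $k$-Lipschitz term into the left-hand side; this is exactly where the isometry property of the local Beurling transform and the uniqueness in Proposition~\ref{splitting} do the real work.
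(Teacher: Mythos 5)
Your proof is correct and takes essentially the same route as the paper: compare $f$ to the frozen-coefficient solution $F$ from Proposition~\ref{splitting}, use the isometry $\|\partial_z f - \partial_z F\|_{L^2(\D_R)} = \|\partial_{\zbar} f - \partial_{\zbar} F\|_{L^2(\D_R)}$ to absorb the $k$-Lipschitz error term, then apply Corollary~\ref{holder1K}(2) and the triangle inequality. The only minor slip is dropping the factor $2$ from the H\"older term of \eqref{Hcondition} (since $\xi_1 = \xi_2 = \partial_z f$ contributes $|\xi_1| + |\xi_2| = 2|\partial_z f|$), which is harmless because it is swallowed by $c(K)$.
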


\begin{proof} The required estimate to prove is then the same as  in Corollary \ref{holder1K}, claim $(2)$, up to the correction term   
$c(K)\,   \mathbf{H}_\alpha(\Omega) R^\alpha \| \partial_zf \|_{L^2(\D_R)} $. This will  arise from a comparison of $f$ and the solution $F$ to an autonomous equation, the local Riemann-Hilbert problem
$$
\begin{cases}
\partial_{\zbar} F(z) = \cH(z_0, \partial_z F(z))  & \text{a.e. $z \in \D_R$}, \\
\Re(f - F) = 0 & \text{on $\partial \D_R$}.
\end{cases}
$$
The existence of $F$  follows by Proposition~\ref{splitting}. Furthermore, by \eqref{Hcondition}, \\
$$
\aligned
&\|\partial_{\zbar} (f-F)\|_{L^2(\D_R)} \\
&\qquad \leq \|\cH(z,\partial_z f)-\cH(z_0,\partial_z f)\|_{L^2(\D_R)} + \|\cH(z_0, \partial_z f)-\cH(z_0,\partial_z F)\|_{L^2(\D_R)} \\
&\qquad\leq2\,\mathbf{H}_\alpha(\Omega)\,R^\alpha\,\|\partial_z f\|_{L^2(\D_R)} +k\, \|\partial_z (f- F)\|_{L^2(\D_R)}.
\endaligned
$$\\
Since the Beurling transform  $\cS_{\D_R}$ of the disk $\D_R$ is an isometry $L^2(\D_R) \to L^2(\D_R)$, we end up with
\begin{equation} \label{basicI}
\| D_z f - D_z F  \|_{L^2(\D_R)}  \leq \frac4{1 - k}\,  \mathbf{H}_\alpha(\Omega)\,R^\alpha\,\|\partial_z f\|_{L^2(\D_R)}.
\end{equation}
\vspace{-.3cm}

On the other hand, Corollary  \ref{holder1K} $(2)$ gives 
$$
\aligned
&\| D_z f - (D_zf )_\rho \|_{L^2(\D_\rho)}  \leq \| D_z F - (D_zF )_\rho \|_{L^2(\D_\rho)} + 2 \| D_z f - D_z F  \|_{L^2(\D_\rho)}\\
&\quad\leq c(K)\left(\frac\rho{R}\right)^{1 + \frac1K}\,\|D_z F-(D_z F)_{R}\|_{L^2(\D_R)} + 2 \| D_z f - D_z F  \|_{L^2(\D_R)}\\
&\quad\leq c(K)\left(\frac\rho{R}\right)^{1 + \frac1K}\,\|D_z f-(D_z f)_{R}\|_{L^2(\D_R)} + (2\,c(K) + 2) \| D_z f - D_z F  \|_{L^2(\D_R)},
\endaligned
$$
$\rho \leq R$. Combining this  with \eqref{basicI} gives the claim.
\end{proof}

If we use  claim $(1)$ of Corollary  \ref{holder1K}, instead of  claim $(2)$,  the same argument as above leads to 
\begin{lem} \label{basicIV}
Suppose $\cH$ satisfies the conditions \eqref{Hcondition}. If $f \in W^{1, 2}_{\loc}(\Omega, \C)$ and   $\D(z_0, R)$ are as in Lemma
\ref{basicII}, then for each $0 < \rho \leq R$,
$$ \| D_z f \|_{L^2(\D_\rho)} \leq c(K)\,  \frac{\rho}{R}\,  \| D_z f \|_{L^2(\D_R)} + c(K) \,  \mathbf{H}_\alpha(\Omega)\, R^\alpha\, \| \partial_z f \|_{L^2(\D_R)}.
$$
\end{lem}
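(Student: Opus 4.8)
The plan is to mimic the proof of Lemma~\ref{basicII} verbatim, replacing the application of Corollary~\ref{holder1K}~$(2)$ with Corollary~\ref{holder1K}~$(1)$. First I would invoke Proposition~\ref{splitting} to produce the solution $F \in W^{1,2}(\D_R, \C)$ to the frozen Riemann--Hilbert problem
\[
\begin{cases}
\partial_{\zbar} F(z) = \cH(z_0, \partial_z F(z)) & \text{a.e. } z \in \D_R,\\
\Re(f - F) = 0 & \text{on } \partial\D_R,
\end{cases}
\]
together with the identity $\|\partial_{\zbar}(f-F)\|_{L^2(\D_R)} = \|\partial_z(f-F)\|_{L^2(\D_R)}$. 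Exactly as in Lemma~\ref{basicII}, condition \eqref{Hcondition} gives the two-term bound
\[
\|\partial_{\zbar}(f-F)\|_{L^2(\D_R)} \leq 2\,\mathbf{H}_\alpha(\Omega)\,R^\alpha\,\|\partial_z f\|_{L^2(\D_R)} + k\,\|\partial_z(f-F)\|_{L^2(\D_R)},
\]
and combining this with the Beurling isometry identity (which controls $\|\partial_z(f-F)\|$ by $\|\partial_{\zbar}(f-F)\|$) yields the comparison estimate \eqref{basicI}, namely
\[
\|D_z f - D_z F\|_{L^2(\D_R)} \leq \frac{4}{1-k}\,\mathbf{H}_\alpha(\Omega)\,R^\alpha\,\|\partial_z f\|_{L^2(\D_R)}.
\]

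The second step is the triangle-inequality decoupling: for $0 < \rho \leq R$ write $\|D_z f\|_{L^2(\D_\rho)} \leq \|D_z F\|_{L^2(\D_\rho)} + \|D_z f - D_z F\|_{L^2(\D_\rho)}$, then apply Corollary~\ref{holder1K}~$(1)$ to the autonomous solution $F$ to get $\|D_z F\|_{L^2(\D_\rho)} \leq c(K)\,\frac{\rho}{R}\,\|D_z F\|_{L^2(\D_R)}$. Since $\D_\rho \subset \D_R$, the error term is bounded by $\|D_z f - D_z F\|_{L^2(\D_R)}$, and another triangle inequality replaces $\|D_z F\|_{L^2(\D_R)}$ by $\|D_z f\|_{L^2(\D_R)} + \|D_z f - D_z F\|_{L^2(\D_R)}$. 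Feeding in \eqref{basicI} absorbs all the $f$--$F$ discrepancy into a $c(K)\,\mathbf{H}_\alpha(\Omega)\,R^\alpha\,\|\partial_z f\|_{L^2(\D_R)}$ term, which delivers the stated inequality.

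There is essentially no obstacle here: the whole argument is a routine re-run of the preceding lemma, and the remark ``the same argument as above leads to'' in the excerpt is literally accurate. If anything needs a word of care, it is only the observation that Corollary~\ref{holder1K}~$(1)$ applies on the full range $\rho \leq R$ (not just $\rho \leq R/2$), so no separate treatment of the regime $R/2 < \rho \leq R$ is needed, and the constant $c(K)$ produced is independent of $\rho$ and $R$ by the scaling built into Proposition~\ref{integralestimate}. Thus the proof is a two-line citation of \eqref{basicI} followed by the triangle-inequality chain, and I would simply write ``Repeat the proof of Lemma~\ref{basicII}, using Corollary~\ref{holder1K}~$(1)$ in place of $(2)$,'' and display the resulting estimate.
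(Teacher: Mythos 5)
Your proposal is correct and coincides with the paper's intent: the paper itself dispenses with a separate proof, noting only that one should rerun the argument of Lemma~\ref{basicII} with claim~$(1)$ of Corollary~\ref{holder1K} in place of claim~$(2)$, which is precisely what you spell out. The only cosmetic alternative worth noting is that instead of the second triangle inequality to trade $\|D_z F\|_{L^2(\D_R)}$ for $\|D_z f\|_{L^2(\D_R)}$, one could equally invoke the explicit bound $\|D_z F\|_{L^2(\D_R)} \leq 2K\,\|D_z f\|_{L^2(\D_R)}$ from Proposition~\ref{splitting}; the constants come out the same up to $c(K)$.
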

\smallskip

Since the $W^{1,2}_{\loc}$-solutions  to \eqref{hqrjac} are a priori $K$-quasiregular, we have the Caccioppoli estimates \eqref{caccioppoli1} at our use. These are convenient to present in the following form.

\begin{lem} \label{cacciopp2}
 Suppose $\cH$ and $f \in W^{1, 2}_{\loc}(\Omega, \C)$  are as in Lemma \ref{basicII}. Let $\D(z_0, R) \subset \Omega'' \Subset \Omega' \Subset \Omega$. If $f\in C^\beta (\Omega', \C)$ for some $0 < \beta \leq 1$, then
$$ \| D_z f \|_{L^2(\D(z_0, R))} \leq c(K, \Omega', \Omega'') \, [f]_{C^\beta(\Omega')} \, R^\beta.$$
\end{lem}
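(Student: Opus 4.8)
The statement to prove is Lemma~\ref{cacciopp2}: if $f\in C^\beta(\Omega',\C)$ solves \eqref{hqrjac} and $\D(z_0,R)\subset\Omega''\Subset\Omega'\Subset\Omega$, then $\|D_zf\|_{L^2(\D(z_0,R))}\le c(K,\Omega',\Omega'')\,[f]_{C^\beta(\Omega')}\,R^\beta$. The plan is to invoke the Caccioppoli estimate \eqref{caccioppoli1} on a slightly enlarged disk and then control the right-hand side using the H\"older seminorm of $f$.

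First I would fix an auxiliary radius. Since $\D(z_0,R)\subset\Omega''\Subset\Omega'$, let $d=\dist(\Omega'',\partial\Omega')>0$ and set $R'=R+\tfrac{d}{2}$ (or more simply $R'=2R$ if $2R<d$ and $\D(z_0,2R)\Subset\Omega'$; in general one takes $R'=\min\{2R,\,R+d/2\}$ so that $\D(z_0,R')\Subset\Omega'$). Because the $W^{1,2}_{\loc}$-solution $f$ of \eqref{hqrjac} is $K$-quasiregular, the Caccioppoli inequality \eqref{caccioppoli1} applies to $f$ itself (not just to difference quotients): for any constant $c$,
\begin{equation*}
\int_{\D(z_0,R)}|D_zf|^2\le\frac{c(K)}{(R'-R)^2}\int_{\D(z_0,R')}|f-c|^2.
\end{equation*}
Now choose the constant $c=f(z_0)$ (or the average $f_{R'}$; either works). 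Then on $\D(z_0,R')$ we have the pointwise bound $|f(z)-f(z_0)|\le[f]_{C^\beta(\Omega')}|z-z_0|^\beta\le[f]_{C^\beta(\Omega')}(R')^\beta$, valid since $\D(z_0,R')\Subset\Omega'$. Hence
\begin{equation*}
\int_{\D(z_0,R')}|f-f(z_0)|^2\le\pi (R')^2\,[f]_{C^\beta(\Omega')}^2\,(R')^{2\beta}.
\end{equation*}
Combining the two displays gives $\|D_zf\|_{L^2(\D(z_0,R))}\le c(K)\,\dfrac{R'}{R'-R}\,[f]_{C^\beta(\Omega')}\,(R')^\beta$.

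It remains to absorb the geometric factors into the constant. If $R'=2R$ then $R'/(R'-R)=2$ and $(R')^\beta=2^\beta R^\beta$, giving exactly the claimed bound with $c(K,\Omega',\Omega'')=2^{1+\beta}c(K)$ — independent of the domains in that range. In the complementary range, where $R$ is not small compared to $d=\dist(\Omega'',\partial\Omega')$, one has $R'-R=d/2$ and $R'\le\diam(\Omega')$, while $R^\beta\ge(d/2)^\beta$ times a harmless factor, so $R'/(R'-R)\le 2\diam(\Omega')/d$ and $(R')^\beta\le C(\Omega',\Omega'')R^\beta$; this produces a constant depending on $\Omega'$ and $\Omega''$ as allowed. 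There is no real obstacle here — the only point requiring a moment of care is the choice of the intermediate radius so that the comparison disk stays compactly inside $\Omega'$ while the constant remains controlled; this is exactly why the statement is phrased with the two nested domains $\Omega''\Subset\Omega'$. Everything else is a direct application of Caccioppoli \eqref{caccioppoli1} together with the trivial estimate of an $L^2$-norm of a H\"older function by its seminorm.
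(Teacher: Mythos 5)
Your proof is correct and follows exactly the route the paper intends (the paper states Lemma~\ref{cacciopp2} without written proof, as a direct consequence of the Caccioppoli inequality \eqref{caccioppoli1} applied to the $K$-quasiregular solution $f$ itself, with the constant chosen as a value of $f$ and the $L^2$-norm of $f-c$ bounded pointwise by $[f]_{C^\beta(\Omega')}(R')^\beta$). Your handling of the intermediate radius $R'$ and of the two regimes $R\leq d/2$ versus $R>d/2$ correctly accounts for the dependence of the constant on $\Omega'$ and $\Omega''$, so nothing is missing.
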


Lastly, let us  recall
\begin{lem}[Lemma 2.1, p. 86, in  {\cite[Chapter III]{Gia}}]\label{growth}
Let $\Psi$ be non-negative, non-decreasing function such that
$$
\Psi(\rho) \leq a\left[\left(\frac{\rho}{R}\right)^{\lambda} + \sigma\right]\Psi(R) + bR^{\gamma}
$$
for every $0 < \rho \leq R \leq R_0$, where $a$ is non-negative constant and $0 < \gamma < \lambda$. Then there exists $\sigma_0 = \sigma_0(a, \lambda, \gamma)$ such that, if $\sigma < \sigma_0$,
$$
\Psi(\rho) \leq c(a, \lambda, \gamma)\left[\left(\frac{\rho}{R}\right)^{\gamma}\Psi(R) + b\rho^{\gamma}\right]
$$
for all $0 < \rho \leq R \leq R_0$.
\end{lem}

\medskip

With these tools and estimates at our disposal we are ready for   the Schauder estimates.

\begin{proof}[Proof of Theorem~\ref{schauder}] Denote $\D_r = \D(z_0, r)$.
\medskip

\noindent{\em Step 1. H\"older continuity of $f$.}\quad We will show that $f$ is actually locally $\beta$-H\"older continuous for every $0 < \beta < 1$. 

Namely, according to Lemma \ref{basicIV} we have
\begin{equation*} \label{guiqintaI} 
\| D_z f \|_{L^2(\D_\rho)} \leq c_0(K) \left( \frac{\rho}{R} \, +  \,  \mathbf{H}_\alpha(\Omega) R^\alpha \right)  \| D_z f \|_{L^2(\D_R)},
\end{equation*}
whenever $0 < \rho \leq R$ and $\D_R = \D(z_0, R) \subset \Omega$.  Applying Lemma \ref{growth} to $\Psi(\rho) = \| D_z f \|_{L^2(\D_\rho)}$, with $b=0, \lambda = 1$ and $\sigma= \mathbf{H}_\alpha(\Omega) R^\alpha$, we see that 
$$\| D_z f \|_{L^2(\D_\rho)} \leq c_1(K)  \left( \frac{\rho}{R} \right)^{1-\epsilon}  \| D_z f \|_{L^2(\D_{R})}, 
$$
where $0 < \rho \leq R \leq \min \{R_0, \dist(z_0, \partial \Omega)\}$. 
Here $R_0$ is small enough; how small $R_0$ needs to be taken depends  on $c_0(K),  \mathbf{H}_\alpha(\Omega)$ and $\epsilon >0$ but not on $z_0$. Thus  the same upper bound  $R_0$ works throughout the bounded domain $\Omega$. 

Combining with the Poincar\'e inequality gives
$$ \|f-f_\rho\|_{L^2(\D_\rho)}  \leq  \rho\, \| D_z f \|_{L^2(\D_\rho)} \leq c_1(K)\, \rho^{2-\epsilon} \,R^{\epsilon-1}\, \| D_z f \|_{L^2(\D_R)},
$$
for $0 < \rho \leq R \leq \min \{R_0, \dist(z_0, \partial \Omega)\}$. 

Let $\D(\omega, 4R) \subset \Omega$. Now, for $\D(z_0, \rho) \subset \D(\omega, 2R)$,
$$ \|f-f_\rho\|_{L^2(\D_\rho)}   \leq c_1(K)\, \rho^{2-\epsilon} \, \min\{R_0, R \}^{-\beta} \| D_z f \|_{L^2(\D(\omega, 3R))}.
$$
In view of \eqref{morreycamp} we see that $f \in C^\beta_{\loc}(\D(\omega, 2R),\C)$ for every $0 < \beta < 1$.  The estimate \eqref{semi} gives a bound for the local H\"older norm,
\begin{equation}\label{fholdernorm}
 [f]_{C^\beta(\D(\omega, R))} \leq c_2(K, \beta, R, \mathbf{H}_\alpha(\Omega)) \, \| D_z f \|_{L^2(\D(\omega, 3R))}.
 \end{equation}

\medskip

\noindent{\em Step 2: Self-improving Morrey-Campanato estimate.}\quad 
Claim: Assume that $1<\alpha+\beta<1+\frac1K$. Then  $D_z f\in C^{\alpha+\beta-1}_{\loc}(\Omega, \C)$. 

\smallskip

Let $\Omega'' \Subset \Omega' \Subset \Omega$. We first show the claim for $\beta < 1$, and start with estimates from Lemma \ref{basicII}, 
$$
\aligned
\| D_z f - (D_zf )_\rho \|_{L^2(\D_\rho)}  &\leq  c_0(K) \left(\frac{\rho}{R}\right)^{1+\frac1K} \|D_z f - (D_zf )_R\|_{L^2(\D_R)}\\ 
&\quad+ c_0(K) \,  \mathbf{H}_\alpha(\Omega)\, R^\alpha\, \| \partial_z f \|_{L^2(\D_R)},
\endaligned $$
when $ \D(z_0, R) \subset \Omega''$. Here, by the Caccioppoli estimate of Lemma \ref{cacciopp2} 
\begin{equation}
\label{boundedness}
 \| \partial_z f \|_{L^2(\D_R)} \leq c_1(K, \Omega', \Omega'') \, [f]_{C^\beta(\Omega')} \, R^\beta, 
\end{equation} 
which by Step 1 is finite for every $\beta < 1$.

We will now apply Lemma \ref{growth} to the non-decreasing function $\Psi(\rho) = \|D_z f-(D_z f)_\rho\|_{L^2(\D_\rho)} = \inf_{a\in \C} \| D_z f - a \|_{L^2(\D_\rho)}$ and the parameters  $\lambda = 1+ \frac1{K}$, $\sigma = 0$ and $b = \mathbf{H}_\alpha(\Omega)   \,[f]_{C^\beta(\Omega')} $. We obtain that
\begin{equation}\label{gammanorm} 
\aligned
\| D_z f - (D_zf )_\rho \|_{L^2(\D_\rho)}  &\leq  c_2 \left(\frac{\rho}{R}\right)^{\alpha+\beta}  \|D_z f - (D_zf )_R\|_{L^2(\D_R)} \\
&\quad + c_2 \, \rho^{\alpha+\beta} \,  \mathbf{H}_\alpha(\Omega)   \,[f]_{C^\beta(\Omega')} 
\endaligned
\end{equation}
whenever $\rho\leq R$. 

In terms of the Morrey-Campanato estimate \eqref{morreycamp} in the set $\Omega''$, we see that 
$D_z f\in C^{\alpha+\beta-1}_{\loc}(\Omega'', \C)$, which is enough for our claim if $\alpha \geq 1/K$. The norm estimate \eqref{thmnorm} follows  
from combining \eqref{semi} with  \eqref{fholdernorm} and \eqref{gammanorm}.

In case $\alpha < 1/K$ we need to continue to show that $f \in C^{1, \alpha}_{\loc}(\Omega, \C)$. But  what we have  above proves that 
 $D_z f$ is locally bounded. Thus the bound  in \eqref{boundedness} remains finite for $\beta = 1$, and we can repeat the proof of \eqref{gammanorm} with $\beta = 1$. Accordingly, \eqref{morreycamp} and \eqref{semi}  give $f \in C^{1, \alpha}_{\loc}(\Omega, \C)$, with norm bound

 $$
 \aligned
{[D_zf]}_{C^\alpha(\D(\omega, R))}
&\leq c(K, \alpha, \omega, R)\biggl[\|D_z f\|_{L^2(\D(\omega, 2R))}\\
&\qquad    + \mathbf{H}_\alpha(\Omega)\,\|D_z f\|_{L^\infty(\D(\omega, 2R))} \biggr].\endaligned
$$

To estimate the $L^\infty$-norm in $\D(\omega, 2R)$, we note that for $\D(z_0, \rho) \subset \D(\omega, \frac{5R}{2})$ \eqref{gammanorm} holds with $\Omega' = \D(\omega, 3R)$ and thus once more by Morrey-Campanato norm estimate \eqref{morreycamp} (with \eqref{sup})
$$
\aligned
\|D_z f\|_{L^\infty(\D(\omega, 2R))} &\leq c(K, \alpha, \omega, R)\biggl[\|D_z f \|_{L^2(\D(\omega, 3R))}\\
&\qquad + \mathbf{H}_\alpha(\Omega)\, [f]_{C^{\beta'}(\D(\omega, 3R))}\biggr],
\endaligned
$$
where $\beta'< 1$. It remains to combine with  \eqref{fholdernorm} to obtain

 $$
{\|D_zf\|}_{C^\gamma(\D(\omega, R))}
\leq c(K, \alpha, \gamma, \omega, R, \mathbf{H}_\alpha(\Omega))\,\|D_z f\|_{L^2(\D(\omega, 9R))},
$$
and we have  the norm bound  \eqref{thmnorm} by rescaling.
\end{proof}

\subsection{Schauder estimates with $C^1$ gradient dependence}\label{C1sec}

\begin{proof}[Proof of Theorem \ref{schauderC1}]
As we see in Step 2 of the proof of Theorem~\ref{schauder}, the restriction on H\"older continuity comes from the autonomous case. Hence it is enough to show that, when the dependence on the gradient is $C^1$, we may improve the norm estimates in Corollary~\ref{holder1K}.

\begin{prop}\label{ccc1}
Let $F \in W^{1,2}_{\loc}(\Omega, \C)$ be a solution to the autonomous nonlinear Beltrami equation  \eqref{auto}, where in addition $\xi \mapsto \cH(\xi) \in C^1(\C, \C)$. Then,
for every $\epsilon > 0$ and $\D(z_0, \rho) \subset \D(z_0, R) \subset \Omega'' \Subset \Omega' \Subset \Omega$,
 $$
 \|D_z F-(D_z F)_\rho\|_{L^2(\D(z_0,\rho))} \leq  c\,\left(\frac\rho{R}\right)^{2 - \epsilon}\,\|D_z F-(D_z F)_{R}\|_{L^2(\D(z_0,R))}
$$
 where $(D_z F)_r = \fint_{\D(z_0, r)} D_z F$ and the constant $c$ depends on the parameters $K$, $\Omega'$, $\Omega''$, $\|D f\|_{L^2(\Omega')}$ and the modulus of continuity of $\cH_{\xi}$ and $\cH_{\bar{\xi}}$. 
\end{prop}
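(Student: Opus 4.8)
The plan is to differentiate the autonomous equation and exploit the $C^1$ regularity of $\cH$ to show that the directional derivatives of $F$ are not merely $K$-quasiregular, but actually solve a \emph{linear} Beltrami equation whose coefficient is the H\"older continuous (indeed Dini continuous) function $z \mapsto \cH_\xi(\partial_z F(z))$. Concretely, for a unit vector $e$ and the difference quotients $F_h$ as in Proposition~\ref{ccc}, a first-order Taylor expansion of $\cH$ along the segment joining $\partial_z F(z)$ and $\partial_z F(z+he)$ gives
\[
\partial_{\zbar} F_h(z) = \mu_h(z)\,\partial_z F_h(z) + \nu_h(z)\,\overline{\partial_z F_h(z)},
\]
where $\mu_h, \nu_h$ are averages of $\cH_\xi$ and $\cH_{\bar\xi}$ over that segment and hence satisfy $|\mu_h| + |\nu_h| \leq k$ pointwise. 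Passing to the limit $h\to 0$ (using the Caccioppoli bound from Proposition~\ref{ccc} so that $\partial_e F \in W^{1,2}_{\loc}$ and $\partial_z F_h \to \partial_z(\partial_e F)$ in $L^2_{\loc}$ along a subsequence, together with the a priori continuity of $D_zF$ from Corollary~\ref{holder1K}) we obtain that $\partial_e F$ solves
\[
\partial_{\zbar}(\partial_e F) = \mu(z)\,\partial_z(\partial_e F) + \nu(z)\,\overline{\partial_z(\partial_e F)},\qquad \mu(z) := \int_0^1 \cH_\xi\bigl(t\,\partial_z F(z)\bigr)\,dt \text{ etc.},
\]
with $|\mu| + |\nu| \leq k$ and $\mu,\nu$ inheriting a modulus of continuity from that of $\cH_\xi,\cH_{\bar\xi}$ composed with the (H\"older, by Corollary~\ref{holder1K}) map $D_z F$.

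With this linear Beltrami equation in hand, the improved Morrey--Campanato decay is exactly the classical Schauder estimate for \emph{linear} Beltrami equations with H\"older (or Dini) coefficients: a $K$-quasiregular solution $v$ of $\partial_{\zbar}v = \mu\,\partial_z v + \nu\,\overline{\partial_z v}$ with H\"older coefficients satisfies $v \in C^\gamma_{\loc}$ for every $\gamma < 1$, and one has the Campanato-type bound
\[
\|v - v_\rho\|_{L^2(\D_\rho)} \leq c\left(\frac{\rho}{R}\right)^{2-\epsilon}\|v - v_R\|_{L^2(\D_R)}.
\]
I would prove this by the same freezing argument used in Theorem~\ref{schauder}: compare $\partial_e F$ on $\D_R$ with the solution $w$ of the Riemann--Hilbert problem for the \emph{constant}-coefficient operator $\partial_{\zbar}w = \mu(z_0)\,\partial_z w + \nu(z_0)\,\overline{\partial_z w}$ having the same real part on $\partial\D_R$; the difference quotient estimate for $\cH$ shows $\|D_z(\partial_e F) - D_z w\|_{L^2(\D_R)} \lesssim \omega(R)\,\|D_z(\partial_e F)\|_{L^2(\D_R)}$ where $\omega$ is the modulus of continuity, while $w$ (being $K$-quasiregular with constant coefficients, hence whose derivatives are again $K$-quasiregular by Proposition~\ref{ccc}) enjoys the decay $(\rho/R)^{1+1/K}$ from Corollary~\ref{holder1K}. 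Feeding this into Lemma~\ref{growth} with $\sigma = \omega(R)$ small and $b=0$, and then bootstrapping --- each iteration upgrades the H\"older exponent of $D_z F$, which improves the modulus of continuity entering the next round, hence improves $\lambda$ in Lemma~\ref{growth} --- one reaches every exponent $\gamma = 2-\epsilon$ at the level of $D_zF$, which is the stated estimate. Finally one translates from $\partial_e F$, $e = 1, i$, back to the Hilbert--Schmidt norm $\|D_z F\|$ exactly as in Corollary~\ref{holder1K}. The dependence of $c$ on $K$, $\Omega'$, $\Omega''$, $\|Df\|_{L^2(\Omega')}$ and the modulus of continuity of $\cH_\xi,\cH_{\bar\xi}$ is then transparent from tracking constants through the Caccioppoli estimates and Lemma~\ref{growth}.

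The main obstacle I anticipate is \emph{justifying the passage $h\to 0$} to obtain the honest linear equation for $\partial_e F$ with a coefficient of the asserted regularity. One must know a priori that $D_z F$ is continuous (so that $\mu_h \to \mu$ in a strong enough sense and the frozen coefficient $\mu(z_0)$ makes sense pointwise) --- this is supplied by Corollary~\ref{holder1K} --- and one must ensure the convergence $\partial_z F_h \to \partial_z(\partial_e F)$ happens in $L^2_{\loc}$, not merely weakly, so that the nonlinear composition $\cH_\xi(\partial_z F(\cdot))$ does not create trouble; here the uniform Caccioppoli bound from Proposition~\ref{ccc} plus the quasiregularity (giving local higher integrability of $D_z F_h$ via the reverse H\"older inequality \eqref{cacci}) yields the needed compactness. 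A secondary technical point is that the coefficient modulus of continuity $\omega$ may only be Dini rather than H\"older after one composition, but this is harmless for Lemma~\ref{growth} as long as $\omega(R)\to 0$, and the bootstrap immediately promotes it to H\"older of every order $<1$.
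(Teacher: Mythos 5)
Your overall strategy coincides with the paper's: differentiate the autonomous equation, exploit $\cH\in C^1$ to produce an $\R$-linear Beltrami equation for the derivative, freeze its (continuous) coefficients, and run the Morrey--Campanato iteration via Lemma~\ref{growth}. Your linearisation via real directional derivatives $\partial_e F$ is in fact a slight simplification over the paper's, which differentiates in $z$ and must then isolate $g_{\zbar}$ from the implicit relation $g_{\zbar}=\cH_\xi(g)g_z+\cH_{\bar\xi}(g)\bar{g_{\zbar}}$, producing the coefficients $\mu=\cH_\xi/(1-|\cH_{\bar\xi}|^2)$, $\nu=\bar{\cH_\xi}\cH_{\bar\xi}/(1-|\cH_{\bar\xi}|^2)$; your route gives the cleaner $v_{\zbar}=\cH_\xi(\partial_zF)v_z+\cH_{\bar\xi}(\partial_zF)\bar{v_z}$ directly, since $\partial_e$ commutes with complex conjugation. (Note, though, that your displayed formula $\mu(z)=\int_0^1\cH_\xi(t\,\partial_zF(z))\,dt$ is not the correct $h\to0$ limit: the averaging segment from $\partial_z F(z)$ to $\partial_z F(z+he)$ collapses to a point, so the limit coefficient is simply $\cH_\xi(\partial_zF(z))$.)

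The genuine gap is in how you close the Morrey--Campanato step. You compare $D_z v$ with $D_z w$ and invoke the decay $(\rho/R)^{1+1/K}$ from Corollary~\ref{holder1K}(2), then propose a bootstrap in which ``improving the modulus of continuity improves $\lambda$.'' This does not work: the coefficient modulus of continuity only enters Lemma~\ref{growth} through $\sigma(R)$, not through $\lambda$; the exponent $\lambda$ is dictated by the constant-coefficient comparison solution and cannot be bootstrapped upward. Moreover the error term you produce, $\sigma(R)\,\|D_z v\|_{L^2(\D_R)}$, is not of the form $\sigma\,\Psi(R)$ when $\Psi$ is taken to be a Campanato seminorm, so Lemma~\ref{growth} does not apply directly in the way you set it up, and since $1+\tfrac1K<2$ a single application would in any case fall short of the exponent $2-\epsilon$. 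The paper avoids all of this by applying Lemma~\ref{growth} to the plain $L^2$-quantity $\Psi(\rho)=\|D_z g\|_{L^2(\D_\rho)}$ using Corollary~\ref{holder1K}(1) (decay $\lambda=1$), for which the error term $\sigma(R)\|D_z g\|_{L^2(\D_R)}$ matches $\Psi(R)$ exactly; this yields $\|D_z g\|_{L^2(\D_\rho)}\leq c(\rho/R)^{1-\epsilon}\|D_z g\|_{L^2(\D_R)}$, and then one application of Poincar\'e on the left and Caccioppoli on the right immediately produces the Campanato decay $(\rho/R)^{2-\epsilon}$ for $D_zF-(D_zF)_\rho$, with no bootstrap needed. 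You should replace your step involving Corollary~\ref{holder1K}(2) and the bootstrap by this shorter argument.
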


\begin{proof} 
We know by Proposition~\ref{ccc} that $\partial_z  F(z) \in W^{1, 2}_{\loc}(\Omega, \C)$. If we differentiate the autonomous equation $\partial_{\zbar} F(z) = \cH(\partial_z F(z))$ with respect to $z$, we get for $g = \partial_z F$ that 
\begin{equation*}
g_{\zbar} =  \cH_\xi(g)\,\partial_z g + \cH_{\bar{\xi}}(g) \,\overline{\partial_{\zbar} g}, \qquad \text{a.e. \quad in $\Omega$}.
\end{equation*}
By isolating $g_{\zbar}$ we obtain the $\R$-linear Beltrami equation
\begin{equation}\label{automunu2}
g_{\zbar} = \mu(g)\,g_z + \nu(g)\,\bar{g_z}
\end{equation}
with the coefficients
\begin{equation}\label{automunu}
\mu(g) = \frac{\cH_\xi(g)}{1 - |\cH_{\bar{\xi}} (g)|^2}, \quad \nu(g) = \frac{\bar{\cH_\xi(g)}\,\cH_{\bar{\xi}}(g)}{1 - |\cH_{\bar{\xi}} (g)|^2},
\end{equation}
satisfying
$$
|\mu(g)| + |\nu(g)| \leq k < 1,
$$
by $k$-Lipschitz property of $\cH$, $|D_\xi \cH(g)| = |\cH_\xi(g)| + |\cH_{\bar{\xi}}(g)| \leq k < 1$. 

\medskip

There are now two natural ways to proceed. First, we have a quick  way to  deduce the $W^{2, p}_{\loc}$-regularity of the solution $F$ for all $1 < p < \infty$ using \eqref{automunu2} and the fact that the  coefficients $\mu$, $\nu$  are continuous. In fact, following the path from \cite{AIS,clop,koski},  for any linear Beltrami equation with coefficients  in $ VMO$ all  $W^{1,2}_{\loc}$-solutions are actually $W^{1,p}_{\loc}$-regular for every $1 < p < \infty$.  However, these arguments 
rely on applying Fredholm theory to the Beltrami equation and as such do not yield  the explicit bounds we need in a straightforward manner.
\medskip

Another approach is to use the Morrey-Campanato method to  improve the norm estimates in Corollary~\ref{holder1K}. Here we split $g = G + (g - G)$, where $G$ solves the Riemann-Hilbert problem of a linear equation with constant coefficients, 
\begin{equation}\label{g}
\begin{cases}
 G_{\zbar} = \mu((\partial_z F)_R)\, G_z + \nu((\partial_z F)_R)\, \bar{G_{z}} & \text{a.e. $z \in\D_R = \D(z_1, R)$}, \\
\Re(g - G) = 0 & \text{on $\partial \D_R$}.
\end{cases}
\end{equation}
Above $z_1 \in \Omega''$, $R \leq \mathrm{dist}(z_1, \partial \Omega'')$, and $(\partial_z F)_R =  \fint_{\D_R} \partial_z F$. Similarly, as we already saw in Proposition~\ref{splitting}, the existence of $G$ is  based on the local versions of the classical Cauchy transform and the Beurling transform. Moreover, 
\begin{equation}\label{gboundforG}
\| D_z G \|_{L^2(\D_R)} \leq c(K) \|D_z g\|_{L^2(\D_R)}.
\end{equation}

For a solution $G$ to \eqref{g}, we have $D_zG  \in W^{1, 2}_{\loc}(\D_R, \C)$ and the directional derivatives $\partial_e  G(z)$, $|e| = 1$, are $K$-quasiregular in $\D_R$, by using difference quotients $G_h$ as in the proof of Proposition~\ref{ccc}. 

We will show that, for every $\epsilon > 0$,
\begin{equation}\label{goal}
\|D^2_z F\|_{L^2(\D_\rho)} \leq  c(K, \epsilon)\left(\frac\rho{R}\right)^{1 - \epsilon}\,\|D^2_z F\|_{L^2(\D_R)},
\end{equation}
whenever $\rho \leq R \leq \min\{ R_0, \mathrm{dist}(z_1, \partial\Omega'')\}$, where $R_0$ will be chosen later.

As $\partial_{\zbar} F = \cH(\partial_z F)$ and $D_\xi \cH(\xi)$ is uniformly bounded by $k$, it is enough to show the claim for $g = \partial_z F$, that is,
\begin{equation}\label{goal2}
\|D_z g\|_{L^2(\D_\rho)} \leq  c(K, \epsilon)\left(\frac\rho{R}\right)^{1 - \epsilon}\,\|D_z g\|_{L^2(\D_R)},
\end{equation}
whenever $\rho \leq R \leq \min\{ R_0, \mathrm{dist}(z_1, \partial\Omega'')\}$.

Since  $\| D_z G(z) \|^2 = \sum_{j=1}^2 |D_z G(z)\,e_j |^2 =  \sum_{j=1}^2 | \partial_{e_j} G(z)|^2$ for the Hilbert-Schmidt norm, the quasiregularity of $\partial_e G$ with integral estimate \eqref{g1} of Proposition~\ref{integralestimate} implies
$$
\|D_z G \|_{L^2(\D_\rho)} \leq c(K)\,\frac{\rho}{R}\;\| D_z G \|_{L^2(\D_R)}.
$$
Hence, by triangle inequality,
\begin{equation}\label{firststage}
\aligned
\|D_z g\|_{L^2(\D_\rho)} &\leq \|D_z G\|_{L^2(\D_\rho)} + \|D_z (g - G)\|_{L^2(\D_\rho)}\\
&\leq c(K)\,\frac{\rho}{R}\;\| D_z G \|_{L^2(\D_R)} + \|D_z (g - G)\|_{L^2(\D_R)}\\
&\leq c(K)\,\frac{\rho}{R}\;\| D_z g \|_{L^2(\D_R)} + \|D_z (g - G)\|_{L^2(\D_R)},
\endaligned
\end{equation}
where the last estimate follows by \eqref{gboundforG}.
Thus we need to estimate  $D_z (g - G)$. Below we use the uniform bound $|\mu| + |\nu| \leq k$ to get that
\begin{align*}
\|(g - &G)_{\zbar}\|_{L^2(\D_R)}\\
&=  \|\mu(g)\, g_z + \nu(g)\, \bar{g_z} - \mu((\partial_z F)_R)\, G_z - \nu((\partial_z F)_R)\, \bar{G_z}\|_{L^2(\D_R)} \\
&\quad\leq  \|\mu((\partial_z F)_R)\,(g - G)_z + \nu((\partial_z F)_R)\,\bar{(g- G)_z}\|_{L^2(\D_R)} \\
&\qquad+  \|(\mu(g) - \mu((\partial_z F)_R))\, g_z + (\nu(g) - \nu((\partial_z F)_R))\, \bar{g_z}\|_{L^2(\D_R)} \\
&\quad\leq k\, \|(g - G)_{z}\|_{L^2(\D_R)}\\
&\qquad + \sup_{z\in \D_R}\bigl[ |(\mu(g) - \mu((\partial_z F)_R)| + |\nu(g) - \nu((\partial_z F)_R))|\bigr] \|g_z\|_{L^2(\D_R)}.
\end{align*}

Hence, combining with \eqref{firststage} and using that the local Beurling transform of the disk is an isometry to absorb the term $k\, \|(g - G)_{z}\|_{L^2(\D_R)}$ into the left hand side, we have
$$
\|D_z g\|_{L^2(\D_\rho)}  \leq c(K)\left(\frac{\rho}{R} + \sigma(R)\right) \|D_zg\|_{L^2(\D_{R})},
$$ 
where
$$
\sigma(R) := \sup_{z\in \D_R}\bigl[ |(\mu(\partial_z F) - \mu((\partial_z F)_R)| + |\nu(\partial_z F) - \nu((\partial_z F)_R))|\bigr].
$$

Now, \eqref{goal2} follows by Lemma~\ref{growth} if we can make $\sigma(R)$ as small as we wish by reducing $R$. This is actually possible since $\partial_z F$ is $\frac1K$-H\"older continuous by Corollary~\ref{holder1K} and $\mu$ and $\nu$ are continuous by the fact that $\cH$ is $C^1$. Here $R_0$ has to be so small that $\sigma(R_0) \leq \sigma_0(K, \epsilon)$, where the constant $\sigma_0$ is from Lemma~\ref{growth}. Moreover, we can  choose $R_0$ uniformly in the compact set $\Omega''$. 

We collect now the dependence of $R_0$ on the parameters. From the proof we see that it depends on the modulus of continuity of $\cH_{\xi}$ and $\cH_{\bar{\xi}}$ on the set $\partial_z F(\Omega'')$ as well as the numbers $[\partial_z F]_{C^{1/K}(\Omega'')}$, $K$ and $\epsilon$. It is also possible, via Corollary~\ref{holder1K} and the Morrey-Campanato norm estimates \eqref{morreycamp}--\eqref{sup}, to bound the size of the set $\partial_z F(\Omega'')$  and $[\partial_z F]_{C^{1/K}(\Omega'')}$ in terms of $c(K, \Omega', \Omega'')\,\|D F\|_{L^2(\Omega')}$.

Using Poincar\'e's inequality on the left hand side of \eqref{goal} and Caccioppoli's inequality on the right we deduce, for  $\rho \leq R \leq \min\{R_0, \mathrm{dist}(z_1, \partial \Omega'')\}$,
\begin{equation}\label{Campanato}
\begin{array}{l}
\|D_z F - (D_z F)_\rho\|_{L^2(\D_\rho)} \\
\qquad \qquad \leq c(K, \epsilon)\left(\frac{\rho}{R}\right)^{1 + (1-\epsilon)}\|D_z F - (D_z F)_R\|_{L^2(\D_R)}.
\end{array}
\end{equation}
As we have seen before, because of the Caccioppoli estimate, we have \eqref{Campanato} first for $\rho \leq \frac{R}{2}$. The full range $\rho \leq R$ holds with possible bigger constant.

The claim follows by covering $\D(z_0, R)$ with disks of radius $R_0$.
\end{proof}

The (nonlinear) $L^2$-norm dependence of ${R}_0$ is reflected in the final $C^\alpha$-norm estimate of $D_z f$, that is, we do not have linear dependence on the $L^2$-norm as in Theorem~\ref{schauder} (i.e., \eqref{thmnorm}).
\end{proof}

\section{Positivity of the Jacobian}\label{Jacproof}

\noindent In this section we prove Theorem~\ref{Jac}. The proof is based on Schauder estimates and the following lemma that establishes a nonlinear equation for the inverses of our solutions. After the proof of Theorem~\ref{Jac} we will give a simple and separate argument for the autonomous case.

\begin{lem}\label{Hinverse} Let $f$ be a homeomorphic solution to the nonlinear Beltrami equation
\[\partial_{\bz}f(z) = \hhh(z,\partial_z f(z)) \qquad \text{a.e. \quad in } \Omega,\]
where $\hhh$ is measurable in $z$ and $k$-Lipschitz, $k = \frac{K-1}{K+1}$, in the gradient variable with the normalisation $\hhh(z,0) \equiv 0$. Then $g = f^{-1}$ solves a nonlinear equation of the form
\[\partial_{\bar{\omega}}g(\omega) = \hhh^*(g(\omega),\partial_{\omega}g(\omega)) \qquad \text{a.e. \quad in } f(\Omega).\]
Moreover,
\begin{itemize}
\item The function $\hhh^*$ does not depend on the solution $f$.
\item $\hhh^*(g,\xi)$ is measurable in the variable $g$ and $\frac{K^3-1}{K^3+1}$-Lipschitz in $\xi$ with the normalisation $\hhh^*(g,0) \equiv 0$.
\item If $\hhh$ satisfies the H\"older condition \eqref{Hcondition}, then so does $\hhh^*$.
\end{itemize}
\end{lem}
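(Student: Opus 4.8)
The plan is to read the passage $\hhh\mapsto\hhh^{*}$ as \emph{pointwise inversion of $\R$-linear maps} and to lean on two elementary facts about an $\R$-linear map $L\colon u\mapsto\alpha u+\beta\bar u$: its linear distortion $\tfrac{|\alpha|+|\beta|}{|\alpha|-|\beta|}$ is multiplicative under composition (so $L^{-1}$ has the same distortion as $L$, and a composition of three maps of distortion $\le K$ has distortion $\le K^{3}$); and if $L$ has finite distortion $Q$ then $\|L\|=|\alpha|+|\beta|\le\tfrac{2Q}{Q+1}\,|\alpha|$, i.e.\ the operator norm is controlled by the Cauchy--Riemann part $\alpha$. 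The $k$-Lipschitz hypothesis on $\hhh(z,\cdot)$ says precisely that the difference of any two $\R$-linear maps of the form $u\mapsto a\,u+\hhh(z,a)\bar u$ has distortion $\le K=\tfrac{1+k}{1-k}$. To begin, $f$ is $K$-quasiconformal, hence so is $g=f^{-1}\in W^{1,2}_{\loc}(f(\Omega),\C)$, and since $J(z,f)>0$ a.e.\ and $g$ maps null sets to null sets, for a.e.\ $\omega$ the point $z=g(\omega)$ is a differentiability point of $f$ with $J(z,f)>0$ and $Dg(\omega)=\big(Df(z)\big)^{-1}$. Writing $a=\partial_zf(z)$ and using $\partial_{\bar z}f=\hhh(z,a)$, the linear map $Df(z)$ is $\zeta\colon u\mapsto a\,u+\hhh(z,a)\bar u$ with $\det\zeta=J_{z}(a):=|a|^{2}-|\hhh(z,a)|^{2}$, and $\zeta^{-1}$ is $v\mapsto\Lambda_{z}(a)\,v-\tfrac{\hhh(z,a)}{J_{z}(a)}\bar v$, where $\Lambda_{z}(a):=\overline a/J_{z}(a)$. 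Since $\hhh(z,0)=0$ and $\hhh(z,\cdot)$ is $k$-Lipschitz, $(1-k^{2})|a|^{2}\le J_{z}(a)\le|a|^{2}$, so $\Lambda_{z}$ is continuous on $\C\setminus\{0\}$ with $|\Lambda_{z}(a)|\asymp|a|^{-1}$.

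The key point is that $\Lambda_{z}$ is a homeomorphism of $\C\setminus\{0\}$. Injectivity: if $a_{1}\ne a_{2}$, set $\zeta_{i}\colon u\mapsto a_{i}u+\hhh(z,a_{i})\bar u$; then $\zeta_{2}-\zeta_{1}$ has nonzero linear part $a_{2}-a_{1}$ and distortion $\le K$, hence is invertible, hence so is $\zeta_{1}^{-1}-\zeta_{2}^{-1}=\zeta_{1}^{-1}(\zeta_{2}-\zeta_{1})\zeta_{2}^{-1}$, so its linear part $\Lambda_{z}(a_{1})-\Lambda_{z}(a_{2})$ is nonzero. Surjectivity onto $\C\setminus\{0\}$: $\Lambda_{z}$ is open (invariance of domain) and proper (the relation $|\Lambda_{z}(a)|\asymp|a|^{-1}$ makes preimages of compacta of $\C\setminus\{0\}$ compact there), so its image is open, closed and nonempty in the connected set $\C\setminus\{0\}$. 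One may then define $\hhh^{*}(z,p):=-\hhh\big(z,\Lambda_{z}^{-1}(p)\big)\big/J_{z}\big(\Lambda_{z}^{-1}(p)\big)$ for $p\ne0$ and $\hhh^{*}(z,0):=0$. Reading off the first paragraph, at a.e.\ $\omega$, with $p:=\partial_{\omega}g(\omega)\ne0$ and $z=g(\omega)$, one has $\partial_{z}f(z)=\Lambda_{z}^{-1}(p)$ and $\partial_{\bar\omega}g(\omega)=\hhh^{*}\big(g(\omega),\partial_{\omega}g(\omega)\big)$, which is the asserted equation. By construction $\hhh^{*}$ is built from $\hhh$ alone, $\hhh^{*}(z,0)\equiv0$, and $|\hhh^{*}(z,p)|=|\hhh(z,a)|/J_{z}(a)\le k\,|a|/J_{z}(a)=k|p|$ for $a=\Lambda_{z}^{-1}(p)$; measurability of $z\mapsto\hhh^{*}(z,p)$ follows from that of $z\mapsto\hhh(z,\cdot)$ together with the uniqueness of $a=\Lambda_{z}^{-1}(p)$ (a measurable selection).

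The Lipschitz bound in $\xi$ is then immediate: for $p_{1},p_{2}\ne0$ with $a_{i}=\Lambda_{z}^{-1}(p_{i})$ and $\zeta_{i}$ as above, the $\R$-linear map with linear part $p_{1}-p_{2}$ and antilinear part $\hhh^{*}(z,p_{1})-\hhh^{*}(z,p_{2})$ is exactly $\zeta_{1}^{-1}-\zeta_{2}^{-1}=\zeta_{1}^{-1}(\zeta_{2}-\zeta_{1})\zeta_{2}^{-1}$, a composition of three maps of distortion $\le K$, hence of distortion $\le K^{3}$; equivalently $|\hhh^{*}(z,p_{1})-\hhh^{*}(z,p_{2})|\le\tfrac{K^{3}-1}{K^{3}+1}|p_{1}-p_{2}|$. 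The cases $p_{i}=0$ are covered by $|\hhh^{*}(z,p)|\le k|p|\le\tfrac{K^{3}-1}{K^{3}+1}|p|$ (as $k=\tfrac{K-1}{K+1}\le\tfrac{K^{3}-1}{K^{3}+1}$).

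It remains to transfer the H\"older condition \eqref{Hcondition}; by the triangle inequality and the bound just proved it suffices to estimate $\hhh^{*}(z_{1},p)-\hhh^{*}(z_{2},p)$ for fixed $p\ne0$. Put $a_{i}=\Lambda_{z_{i}}^{-1}(p)$, $\zeta_{i}\colon u\mapsto a_{i}u+\hhh(z_{i},a_{i})\bar u$, so that $\zeta_{1}^{-1}-\zeta_{2}^{-1}=\zeta_{1}^{-1}(\zeta_{2}-\zeta_{1})\zeta_{2}^{-1}$ is purely antilinear with coefficient $\hhh^{*}(z_{1},p)-\hhh^{*}(z_{2},p)$. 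Split $\zeta_{2}-\zeta_{1}=\eta+\tau$, where $\eta$ has linear part $a_{2}-a_{1}$ and antilinear part $\hhh(z_{1},a_{2})-\hhh(z_{1},a_{1})$ (distortion $\le K$, being a difference inside the graph of $\hhh(z_{1},\cdot)$) and $\tau$ is purely antilinear with coefficient $\hhh(z_{2},a_{2})-\hhh(z_{1},a_{2})$, so $\|\tau\|\le2\,\mathbf{H}_{\alpha}(\Omega)|z_{1}-z_{2}|^{\alpha}|a_{2}|$. Then $\|\zeta_{1}^{-1}\tau\zeta_{2}^{-1}\|\le(1-k)^{-2}|a_{1}|^{-1}|a_{2}|^{-1}\|\tau\|\lesssim(1-k)^{-2}\mathbf{H}_{\alpha}(\Omega)|z_{1}-z_{2}|^{\alpha}|p|$ (using $|a_{i}|^{-1}\le|p|$), while $\zeta_{1}^{-1}\eta\zeta_{2}^{-1}$ has distortion $\le K^{3}$, hence norm at most $2$ times the modulus of its linear part, and that linear part equals $-$(linear part of $\zeta_{1}^{-1}\tau\zeta_{2}^{-1}$), again $\lesssim(1-k)^{-2}\mathbf{H}_{\alpha}(\Omega)|z_{1}-z_{2}|^{\alpha}|p|$. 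Summing the two contributions gives $|\hhh^{*}(z_{1},p)-\hhh^{*}(z_{2},p)|\lesssim(1-k)^{-2}\mathbf{H}_{\alpha}(\Omega)|z_{1}-z_{2}|^{\alpha}|p|$, which together with the $\xi$-Lipschitz bound yields \eqref{Hcondition} for $\hhh^{*}$ with $k$ replaced by $\tfrac{K^{3}-1}{K^{3}+1}$. I expect this last paragraph to be the only real obstacle: naive operator-norm estimates for inverses degenerate as $k\to1$, and one genuinely needs the two structural facts above — distortions multiply under composition, and a map of bounded distortion has its norm controlled by its Cauchy--Riemann part.
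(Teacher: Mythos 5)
Your proof is correct, and it arrives at the same $\hhh^*$ (one can check that $-\hhh(z,\Lambda_z^{-1}(p))/J_z(\Lambda_z^{-1}(p))$ agrees with the $\zeta$ the paper solves for), but it does so by a genuinely different route in two of the three ingredients. For the existence and uniqueness of $\hhh^*$, the paper fixes $g,\xi$, substitutes $\zeta = F_\xi^{-1}(\zeta')$ into the pointwise relation, and shows the resulting map in $\zeta'$ is a contraction, so $\hhh^*$ is produced by Banach's fixed point theorem; you instead prove $\Lambda_z$ is a homeomorphism of $\C\setminus\{0\}$ by combining injectivity (from distortion multiplicativity) with invariance of domain and properness, which is more topological but conceptually transparent. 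For the $\frac{K^3-1}{K^3+1}$-Lipschitz bound the two proofs are essentially identical: your identity $\zeta_1^{-1}-\zeta_2^{-1}=\zeta_1^{-1}(\zeta_2-\zeta_1)\zeta_2^{-1}$ is the same as the paper's $A_1-A_2 = A_2(A_2^{-1}-A_1^{-1})A_1$, and both exploit multiplicativity of linear distortion. For the H\"older transfer the paper runs an explicit algebraic estimate with the ratio $R = \frac{|\xi|^2-|\zeta_1|^2}{|\xi|^2-|\zeta_2|^2}$ and two elementary inequalities \eqref{zetaestim1}--\eqref{zetaestim2}; your split $\zeta_2-\zeta_1 = \eta+\tau$, combined with the fact that a finite-distortion linear map has operator norm at most $\frac{2Q}{Q+1}$ times the modulus of its conformal part, is cleaner and makes the mechanism (you must only control the conformal part of the antilinear output, and the $\eta$-contribution to that part cancels against the $\tau$-contribution) more visible.

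One place where your proposal is thinner than the paper's: the measurability of $g\mapsto\hhh^*(g,\xi)$. You invoke ``uniqueness plus a measurable selection,'' which is morally right but not a complete argument when $\hhh$ is merely Lusin-measurable in $z$, since one must justify that the inverse $z\mapsto\Lambda_z^{-1}(p)$ of a family of homeomorphisms depending measurably on a parameter is itself measurable. The paper sidesteps this by producing $\hhh^*$ as the pointwise limit of the contraction iterates, each of which is measurable in $z$ by \cite[Theorem 7.7.2]{AIM}; if you keep your topological construction you should still run that iteration (or quote a genuine selection theorem) to close this point.
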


\begin{rem} Note that the assumption that $\hhh(z,\xi)$ is measurable in $z$ and $k$-Lipschitz in $\xi$ implies the natural condition of Lusin-measurability of $\hhh$, as defined in \cite[Section 7.6]{AIM}. Nevertheless, these assumptions are only in place to make the statement of Lemma~\ref{Hinverse} more general as throughout the rest of the paper we assume the condition \eqref{Hcondition}, in particular that $\hhh$ is continuous in $(z,\xi)$.
\end{rem}

\begin{proof} [Proof of Lemma~\ref{Hinverse}.] Note first that since $f$ is quasiconformal, we have $J_f(z) > 0$ almost everywhere in $\Omega$. The identities
\[\frac{-g_{\bar{\omega}}}{J_g} = f_{\bz} \quad \text{ and } \quad \frac{\bar{g_{\omega}}}{J_g} = f_z \qquad \text{at $\omega = f(z)$}\]
are also valid almost everywhere. Thus we find that $g$ satisfies the nonlinear equation
\begin{equation}\label{gequ}\frac{-g_{\bar{\omega}}}{|g_{\omega}|^2 - |g_{\bar{\omega}}|^2} = \hhh\left(g,\frac{\bar{g_{\omega}}}{|g_{\omega}|^2 - |g_{\bar{\omega}}|^2}\right)\end{equation}
almost everywhere in $f(\Omega)$. We first want to show that $g_{\bar{\omega}}$ can be uniquely solved from this equation in terms of $g$ and $g_\omega$. To do this we consider \eqref{gequ} as an equation of three complex variables:
\begin{equation}\label{EqSolvable}\frac{-\zeta}{|\xi|^2 - |\zeta|^2} =  \hhh\left(g,\frac{\bar{\xi}}{|\xi|^2 - |\zeta|^2}\right),\end{equation}
where the variables are as follows:
\begin{equation}\label{alue}
\left\{
\begin{array}{l}
\xi, \text{ an arbitrary complex number},  \\
\zeta, \text{ a complex variable that satisfies } |\zeta| \leq k|\xi|,\\
g, \text{ a complex variable that belongs to the set } \Omega.
\end{array} \right.
\end{equation}
We now solve \eqref{EqSolvable} in terms of $\zeta$. Fix the variables $g$ and $\xi$ and consider the function
\[F_{\xi}(\zeta) = \frac{-\zeta}{|\xi|^2 - |\zeta|^2}.\]
Then it is easy to check that $F_{\xi}$ is bijective from the disk $|\zeta| \leq k|\xi|$ onto another disk. We can hence make a substitution $\zeta = F_{\xi}^{-1}(\zeta')$ into \eqref{EqSolvable}. This transforms the equation into
\begin{equation}\label{zetaprime}\zeta' =  \hhh\left(g,\frac{\bar{\xi}}{|\xi|^2 - |F_{\xi}^{-1}(\zeta')|^2}\right).\end{equation}
Now, it happens that the map
\begin{equation*}\label{zetamap}\zeta' \mapsto \frac{\bar{\xi}}{|\xi|^2 - |F_{\xi}^{-1}(\zeta')|^2}\end{equation*}
is actually a contraction. This can be seen by differentiation, for example. Thus by the $k$-Lipschitz property of $\hhh$, the expression on the right hand side of \eqref{zetaprime} is a contraction in terms of $\zeta'$. Hence it has a unique fixed point, which shows that the equation can be uniquely solved for $\zeta'$. Thus $\zeta$ can also be uniquely solved from \eqref{EqSolvable} in terms of $g$ and $\xi$ in the disk $|\zeta| \leq k|\xi|$, and we can use this as the definition of $\hhh^*$,
\[\zeta = \hhh^*(g,\xi).\]
We would also like to make sure that the function $\hhh^*$ is measurable in the variable $g$. This follows from the fact that $\hhh^*(g,\xi)$ can be obtained by iterating the right hand side of \eqref{zetaprime}. At each point of the iteration the function is measurable due to Lusin-measurability of $\hhh$ and \cite[Theorem 7.7.2]{AIM}, and the limit function is measurable as a pointwise limit of measurable functions.

\smallskip

We now show that $\hhh^*$ is $\frac{K^3-1}{K^3+1}$-Lipschitz in the variable $\xi$. To do this we have to show that if
\[\zeta_j = \hhh^*(g,\xi_j) \quad j = 1,2,\]
then $|\zeta_1 - \zeta_2| \leq \frac{K^3-1}{K^3+1} |\xi_1-\xi_2|$. By definition $\zeta_j$ solves the equation \eqref{EqSolvable} for $g$ and $\xi_j$. Thus
\begin{equation}\label{zetajEq}\left|\frac{\zeta_2}{|\xi_2|^2 - |\zeta_2|^2} - \frac{\zeta_1}{|\xi_1|^2 - |\zeta_1|^2}\right| \leq \frac{K-1}{K+1}\left|\frac{\xi_2}{|\xi_2|^2 - |\zeta_2|^2} - \frac{\xi_1}{|\xi_1|^2 - |\zeta_1|^2} \right|.\end{equation}
Define two linear maps by $A_j z = \xi_j z + \zeta_j \bz$. We say that a linear map $A$ is $K$-quasiregular if
\[|A|^2 \leq K\det A.\]
Then the maps $A_j$ are $K$-quasiregular by the property $|\zeta_j| \leq \frac{K-1}{K+1}|\xi_j|$. By \eqref{zetajEq}, the map $A_2^{-1} - A_1^{-1}$ is also $K$-quasiregular. We can now use the identity
\[A_1 - A_2 = A_2\,(A_2^{-1} - A_1^{-1})\,A_1\]
to find that the linear map $A_1 - A_2$ is $K^3$-quasiregular. This gives that $|\zeta_1 - \zeta_2| \leq \frac{K^3-1}{K^3+1} |\xi_1-\xi_2|$ as wanted.

\smallskip

It remains to prove that the condition \eqref{Hcondition} is also inherited by $\hhh^*$. We must prove that there exists a constant $C$ such that if
\[\zeta_j = \hhh^*(g_j,\xi) \quad j = 1,2,\]
then $|\zeta_1 - \zeta_2| \leq C |g_1 - g_2|^{\alpha} |\xi|$. By using the property \eqref{Hcondition} of $\hhh$, we obtain that
\begin{align*}\left|\frac{\zeta_1}{|\xi|^2 - |\zeta_1|^2} - \frac{\zeta_2}{|\xi|^2 - |\zeta_2|^2}\right| &\leq
\mathbf{H}_{\alpha}(\Omega)\,|g_1 - g_2|^{\alpha}\left(\frac{|\xi|}{|\xi|^2 - |\zeta_1|^2}+\frac{|\xi|}{|\xi|^2 - |\zeta_2|^2}\right)
\\&\quad +\frac{K-1}{K+1}\left|\frac{\xi}{|\xi|^2 - |\zeta_1|^2} - \frac{\xi}{|\xi|^2 - |\zeta_2|^2}\right|.
\end{align*}
Denote $R = \frac{|\xi|^2 - |\zeta_1|^2}{|\xi|^2 - |\zeta_2|^2}$. Then the above estimate can also be written as
\[|\zeta_1 - R\,\zeta_2| - k|\xi||1-R| \leq \mathbf{H}_{\alpha}(\Omega)\,|g_1 - g_2|^{\alpha}|\xi|(1 + R).\]
Note that $R$ is bounded from above by a constant, thus the right hand side is already of the desired form. It remains to prove the elementary inequality
\[|\zeta_1 - R\,\zeta_2| - k|\xi||1-R| \geq c\,|\zeta_1 - \zeta_2|\]
for a sufficiently small constant $c > 0$. This will follow once we prove the two estimates
\begin{equation}\label{zetaestim1}
|\zeta_1 - R\,\zeta_2| \geq c\,|\zeta_1 - \zeta_2|
\end{equation}
and
\begin{equation}\label{zetaestim2}
|\zeta_1 - R\,\zeta_2| \geq |\xi||1-R|.
\end{equation}
To prove \eqref{zetaestim1}, it is enough to do the following estimate:
\begin{align*}|\zeta_1 - R\,\zeta_2| &= \left|\frac{(\zeta_1 - \zeta_2)|\xi|^2 + \zeta_1\zeta_2(\bar{\zeta_1} - \bar{\zeta_2})}{|\xi|^2 - |\zeta_2|^2}\right| \geq \frac{|\xi|^2 - k^2|\xi|^2}{|\xi|^2}|\zeta_1 - \zeta_2|.
\end{align*}
For \eqref{zetaestim2}, it suffices to estimate:
\begin{align*}
|\zeta_1 - R\,\zeta_2| &\geq ||\zeta_1| - R|\zeta_2||
\\&= \left||\zeta_1| - |\zeta_2|\right|\frac{|\xi|^2 + |\zeta_1||\zeta_2|}{|\xi|^2 - |\zeta_2|^2}
\\& \geq \left||\zeta_1| - |\zeta_2|\right|\frac{|\xi|(|\zeta_1|+|\zeta_2|)}{|\xi|^2 - |\zeta_2|^2}
\\&= |\xi||1-R|.
\end{align*}
This finishes the proof of Lemma~\ref{Hinverse}.
\end{proof}

\begin{proof} [Proof of Theorem~\ref{Jac}.] Combining Theorem~\ref{schauder} and Lemma~\ref{Hinverse}, we find that both $f$ and $g = f^{-1}$ are $C^{1,\alpha}_{\loc}$-regular. Outside of the set where $J_f(z) = 0$ we have the identity
\[J_f(z) J_g(f(z)) = 1.\]
Both of the Jacobians are continuous functions, which means that this identity must hold everywhere. This shows that $J_f(z) > 0$ everywhere.
\medskip

To complete the proof of Theorem \ref{Jac} we next use a compactness argument to show that for a normalised homeomorphic solution $f$ to \eqref{hqrjac} there is a lower bound for the Jacobian in each disk $\D(0, R_0)$, that is,
$$
\inf_{z\in\D(0, R_0)}J(z, f) \geq c(\cH, R_0) > 0.
$$
We also collect the  dependence of the constant $c(\cH,R_0)$ on $\cH$ and $R_0$. It will be shown that $c(\cH,R_0)$ only depends on the numbers $R_0, k, \alpha,$ $\mathbf{H}_\alpha(\D(0, 8R_0))$.

Let us make a counter-assumption: there exist $z_n\in\D(0, R_0)$ and normalised homeomorphic solutions $f_n$ to the nonlinear Beltrami equations of the type \eqref{hqrjac} with the regularity \eqref{Hcondition}, i.e.,
$$
\partial_{\zbar}f_{n}(z) = \cH_{n}(z, \partial_z f_{n}(z)) \qquad \text{a.e.},
$$
 such that
$$
J(z_n, f_n) \leq \frac{1}{n}.
$$
In particular, the H\"older constant  $\mathbf{H}_\alpha(\D(0, 8R_0))$, the H\"older exponent $\alpha$ and the ellipticity $k$ are assumed to be the same for each $\cH_n$.

Now, we may pass to the subsequence, if necessary, to assume that $z_{n} \to z_{\infty} \in \overline{\D(0, R_0)}$ and as a normalised family of quasiconformal maps $f_{n} \to f_{\infty}$  locally uniformly, where $f_\infty$ is quasiconformal and $f_\infty(0) = 0$, $f_\infty(1) = 1$ (see the Montel-type theorem \cite[Theorem 3.9.4]{AIM}). Moreover, by the Schauder norm estimate \eqref{thmnorm}, for any $R > 0$,
\begin{equation}\label{Dfgammab}
\aligned
\|D_z f_{n} \|_{C^{\gamma}(\D(0, R))} &\leq c\,\|D_z f_{n}\|_{L^{2}(\D(0, 2R))}\leq c\,\|f_{n}\|_{L^{2}(\D(0, 4R))}\\
&\leq c\,\eta_K(4R),
\endaligned
\end{equation}
where $c = c(\cH, R)$ and the second to the last inequality follows by Caccioppoli's inequality and the last one from the $\eta_K$-quasisymme\-try of quasiconformal maps.  Hence derivatives $D_z f_{n}$ have a local uniform $C^{\gamma}$-upper bound and mappings $f_{n}$ converge to $f_\infty$ in $C^{1, \gamma}_{\loc}(\C, \C)$, too.  Thus $J(z_\infty, f_\infty) = 0$.

We will show that the inconsistency follows from the fact that $f_\infty$ also solves a nonlinear Beltrami equation
\begin{equation}\label{Hinfty}
\partial_{\zbar}f_\infty(z) = \cH_{\infty}(z, \partial_z f_\infty(z)) \qquad \text{a.e.,}
\end{equation}
where $\cH_{\infty}$ will satisfy the assumption \eqref{Hcondition}.

We first find $\cH_\infty$ as a limit of the structure functions $\cH_{n}$. Namely, $\cH_{n}$ is locally uniformly equicontinuous on $\C \times \C$. Indeed, given open, bounded sets $\Omega'$, $\Omega''$ and $(z_i, \xi_i) \in \Omega' \times \Omega''$, by assumption
\begin{align*}
&|\cH_{n}(z_1, \xi_1) - \cH_{n}(z_2, \xi_2)| \leq \mathbf{H}_\alpha(\Omega')|z_1 - z_2|^\alpha(|\xi_1| + |\xi_2|) + k\,|\xi_1 - \xi_2|.
\end{align*}
This gives the equicontinuity. Thus passing to a subsequence it converges to a function $\cH_\infty$ locally uniformly, where $\cH_\infty$ has the same regularity and norm bounds \eqref{Hcondition} as the family $\cH_{n}$.


As $\cH_{\infty}$ has the required regularity properties, we must only show that $f_{\infty}$ satisfies equation \eqref{Hinfty}. But this is immediate from the fact that the convergence of $D_z f_{n}$ is also locally uniform (they converge in the H\"older class as seen above). By the earlier part of the proof of Theorem~\ref{Jac}, we now know that $J_{f_{\infty}} > 0$ in the set $\D(0,R_0)$, a contradiction to the fact that $J(z_\infty, f_\infty) = 0$. Hence there must be a lower bound for the Jacobian, and we have proved Theorem~\ref{Jac}.
\end{proof}

For the reader's interest, we also present a different proof for the positivity of the Jacobian in the autonomous case. This proof is based on Sto\"ilov factorisation, Hurwitz theorem and a compactness argument inspired by \cite{AstalaFaraco02}.

\begin{thm}\label{positivejacobianF}
Assume $\cH:\C \to \C$ is $k$-Lipschitz, where $k = \frac{K-1}{K+1} < 1$, with $\cH(0)=0$ and let $F \in W^{1, 2}_{\loc}(\Omega, \C)$ be a homeomorphic solution to  $$\partial_{\zbar} F(z) = \cH(\partial_z F(z)) \qquad {\text{a.e.}}$$
Then  $J(z,F) \neq 0$ at every point $z\in\Omega$.
\end{thm}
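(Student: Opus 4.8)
The plan is to deduce $J(z,F)\neq 0$ from the stronger assertion that the partial derivative $F_x = \partial_x F$ has no zero in $\Omega$. This reduction is immediate: since $|\partial_{\zbar}F| = |\cH(\partial_z F)|\leq k\,|\partial_z F|$, a zero of $\partial_z F$ is also a zero of $\partial_{\zbar}F$, hence of $\partial_x F = \partial_z F + \partial_{\zbar}F$; so $\partial_x F(z)\neq 0$ forces $\partial_z F(z)\neq 0$, and then $J(z,F) = |\partial_z F(z)|^2 - |\partial_{\zbar}F(z)|^2 \geq (1-k^2)|\partial_z F(z)|^2 > 0$.

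To see that $\partial_x F$ is zero-free I would approximate it by difference quotients and invoke a quasiregular version of Hurwitz's theorem. For small $h\neq 0$ set $F_h(z) = (F(z+h)-F(z))/h$. By the proof of Proposition~\ref{ccc} each $F_h$ is $K$-quasiregular and the maps $F_h$ have locally uniform Caccioppoli bounds; moreover $F_h\to\partial_x F$ in $L^2_{\loc}$ because $F\in W^{1,2}_{\loc}$. Combining this with the local $\frac1K$-H\"older estimate for $K$-quasiregular maps (Proposition~\ref{integralestimate}, Corollary~\ref{holder1K}) and Arzel\`a--Ascoli upgrades the convergence to $F_h\to\partial_x F$ \emph{locally uniformly}. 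The key point that makes the argument run is that $F$ is a \emph{homeomorphism}: hence $F(z+h)\neq F(z)$, so each $F_h$ is a \emph{zero-free} $K$-quasiregular map.

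Now suppose $\partial_x F(z_0)=0$. Since $\partial_x F$ cannot vanish identically on any ball (otherwise $F$ would be constant on a horizontal segment, contradicting injectivity), $\partial_x F$ is a non-constant $K$-quasiregular map near $z_0$. By the Sto\"ilov factorisation, $\partial_x F = \phi\circ\chi$ with $\chi$ a sense-preserving homeomorphism and $\phi$ holomorphic, so $z_0$ is an \emph{isolated} zero whose winding number (local degree) equals the order of the corresponding zero of $\phi$, in particular is $\geq 1$. Pick a small circle $\gamma$ around $z_0$ on which $\partial_x F\neq 0$. For $h$ small enough, $F_h$ is so uniformly close to $\partial_x F$ on $\gamma$ that $F_h\neq 0$ on $\gamma$ and $F_h|_\gamma$ is homotopic to $\partial_x F|_\gamma$ in $\C\setminus\{0\}$; hence the winding number of $F_h|_\gamma$ about $0$ equals that of $\partial_x F|_\gamma$, namely $\geq 1$. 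But $F_h$ has no zeros at all, so this winding number is $0$ — a contradiction. Therefore $\partial_x F$ is zero-free, and the theorem follows.

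I expect the main technical step to be the promotion of $F_h\to\partial_x F$ from $L^2_{\loc}$ to local uniform convergence — this is exactly where the quasiregularity of the $F_h$ and the H\"older estimates of Section~\ref{autosec} are used; everything else is standard winding-number/Hurwitz bookkeeping, the only genuine external input being the isolatedness and strictly positive local degree of the zeros of a non-constant quasiregular map. An alternative route avoiding Hurwitz: differentiating the autonomous equation as in Proposition~\ref{ccc1} — which goes through for merely Lipschitz $\cH$, since one only needs the distortion inequality $|\partial_{\zbar}g|\leq k|\partial_z g|$ for $g=\partial_z F$ — gives that $g$ is itself $K$-quasiregular, so a zero $z_0$ of $g$ is isolated with winding number $m\geq 1$; then the tangent of the simple closed $C^1$ curve $F\circ\gamma$ has turning number $m+1\geq 2$, contradicting Hopf's Umlaufsatz, which forces $\pm1$ for a positively oriented simple closed curve.
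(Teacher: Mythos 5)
Your proposal is correct, and its skeleton is the same as the paper's: form the difference quotients $F_h$, observe they are $K$-quasiregular (the computation \eqref{qrforDF}) and zero-free because $F$ is injective, show they converge locally uniformly to $\partial_x F$, and conclude by a Hurwitz-type argument that the limit cannot have a zero. The difference is in how the Hurwitz step is executed. The paper applies Sto\"ilov factorisation to \emph{each} $F_h$ (normalising the quotient by $F(z_1+h)-F(z_1)$ so that the quasiconformal factors $\Phi_h$ form a normal family), passes to limits of both factors via Montel-type compactness, and invokes the classical Hurwitz theorem for the holomorphic factors $H_h$; its closing Remark notes that one could instead cite the quasiregular Hurwitz theorem of Miniowitz. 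You effectively \emph{prove} that quasiregular Hurwitz statement in the case at hand: you apply Sto\"ilov only to the limit $\partial_x F$ (quasiregular by Proposition~\ref{ccc}) to get that a putative zero is isolated with local degree $\geq 1$, and then derive the contradiction by elementary winding-number/homotopy bookkeeping, since a map without zeros in a closed disk has boundary winding number $0$. This is self-contained and avoids the compactness of normalised quasiconformal families, at the cost of your slightly heavier step of upgrading $F_h\to\partial_x F$ from $L^2_{\loc}$ to locally uniform convergence via uniform Caccioppoli and H\"older bounds plus Arzel\`a--Ascoli; the paper gets that convergence for free from $F\in C^{1,1/K}_{\loc}$ (Corollary~\ref{holder1K}), which you could use as well. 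Your initial reduction ($\partial_x F\neq 0$ implies $\partial_z F\neq0$ implies $J>0$) and the dichotomy ruling out $\partial_x F\equiv 0$ near a zero are both sound, and your Umlaufsatz aside is a genuinely different, attractive alternative, though one should note there that regularity of the curve $F\circ\gamma$ uses the invertibility of $DF$ on $\gamma$, which is available since $\gamma$ avoids the zero set of $\partial_z F$.
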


\begin{proof}
Let us fix a disk $\D(z_0,2R)\subset\Omega$ and a point $z_1\in \D(z_0,R)$ where $J(z_1,F)\neq 0$. The derivatives of $F$ are continuous by Proposition~\ref{ccc} and we can assume, for instance, that $\partial_xF(z_1)\neq0$ and we will show that $\partial_x F(z) \neq 0$ everywhere. This is enough, since $|D_z F|^2 \leq K J_F(z)$.

Let us define
\begin{equation} \label{yksi}
F_h(z)=\frac{F(z+h)-F(z)}{F(z_1+h)-F(z_1)},\qquad h>0.
\end{equation}
Clearly $F_h$ is well-defined on $\Omega_h = \{z\in\Omega: d(z,\partial\Omega)>h\}$, and $\D(z_0,2R)\subset\Omega_h$ for any $h<d(z_0,\partial\Omega)-2R$. Further, $F_h$ is $K$-quasiregular on $\Omega_h$, as we saw  in \eqref{qrforDF}. Moreover, by Proposition~\ref{ccc}, we know that $D_z F\in W^{1,2}_{\loc}(\Omega, \C)$ and $F \in C^{1,\frac1K}_{\loc}(\Omega)$.

We can factor, by Sto\"ilow factorisation,
$$F_h=H_h\circ\Phi_h$$
where $\Phi_h:\C\to\C$ is $K$-quasiconformal, and we choose the normalisation $\Phi_h(z_0)=0$, $\Phi_h(z_1)=1$, and $H_h:\Phi_h(\Omega_h)\to \C$ is holomorphic. Moreover, $H_h(1)=1$, by the definition of $F_h$ and the above normalisation of $\Phi_h$. Since $\Phi_h$ are normalised $K$-quasiconformal maps, there exists a limit $K$-quasiconformal map
$$\Phi=\lim_{h\to 0^+}\Phi_h,$$
with locally uniform convergence, at least for a subsequence, see the Montel-type theorem \cite[Theorem 3.9.4]{AIM}. Similarly, for the same subsequence $ \Phi_h^{-1} \to  \Phi^{-1}$ locally uniformly in $\Phi(\D(z_0,R))$.

Note further that since $F$ is continuously differentiable and $\partial_xF(z_1)\neq0$, the functions $F_h$ in \eqref{yksi} converge locally uniformly in $\D(z_0,R)$, hence also $H_h = F_h \circ \Phi_h^{-1}$ converges locally uniformly 
in $\Phi(\D(z_0,R))$.
Let us now fix a compact set $E\subset \Phi(\D(z_0,R))$ with $1$ as an interior point. Since $\Phi_h(\D(z_0,R))$ converges in the Hausdorff metric to $\Phi(\D(z_0,R))$, for every $h$ small enough  $\Phi(\D(z_0,R)) \Subset \Phi_h(\D(z_0,2R))$. Thus $E\subset\Phi(\D(z_0,R))\Subset\Phi_h(\Omega_h)$, and so $H_h$, $h < h_0$, is well-defined family of functions analytic  on a neighbourhood of $E$, with limit $$H=\lim_{h\to 0^+}H_h$$
at least for a subsequence. Of course, the limit mapping $H$ is holomorphic on a neighbourhood of $E$ and $H(1)=1$. Then it follows that
$$
\lim_{h\to 0^+}H_h\circ\Phi_h = H\circ\Phi
$$
uniformly on compact subsets of $\D(z_0,R)$. In particular, 
$$\frac{\partial_x F(z)}{\partial_x F(z_1)}=H(\Phi(z))\qquad\text{for every }z\in\D(z_0,R).$$ 

But the analytic functions $H_h$ do not have zeros in $\Phi_h(\Omega_h)$, since $F$ is a homeomorphism. By   the Hurwitz theorem $H$ as well is non-vanishing on $E$, that is, $\frac{\partial_x F(z)}{\partial_x F(z_1)}$ does not have zeros in $\D(z_0, R)$. We have shown our claim.
\end{proof}

\begin{rem}
Alternatively in the proof of Theorem~\ref{positivejacobianF} one can invoke the Hurwitz theorem for quasiregular mappings \cite{mini} which tells for any converging subsequence that either the limit
$\lim_j F_{h_j}(z)$ is non-vanishing everywhere, or the limit vanishes identically.
\end{rem}

\end{document}